\newtheorem{thm}{Theorem}[section]
\newtheorem{remark}[thm]{Remark}
\numberwithin{equation}{section}
\def\bff{{\mathbf f}}
 \def\bn{{\mathbf n}}
\def\bu{{\mathbf u}} \def\bv{{\mathbf v}}
\def\bH{{\mathbf H}}
\def\bN{{\mathbf N}}
\def\bphi{\boldsymbol{\phi}}
\def\RR{{\mathbb R}}
\newcommand{\wtilde}{\widetilde}
\newcommand{\what}{\widehat}
\newcommand{\Div}{\mathrm{div}}
 \newcommand{\bsigma}{{\boldsymbol{\sigma}}}
\newcommand{\bepsilon}{\boldsymbol{\epsilon}}
\def\bphi{\mbox{\boldmath{$\phi$}}}
\def\bsigma{\mbox{\boldmath{$\sigma$}}}
\title{ Immersed finite element method for eigenvalue problems in elasticity }
\author{ {Seungwoo Lee}\footnotemark[1] \and {Do Y. Kwak}\footnotemark[1] 
        \and Imbo Sim \footnotemark[2]}
\begin{document}
\maketitle

\renewcommand{\thefootnote}{\fnsymbol{footnote}}
\footnotetext[1]{Department of Mathematical Science, Korea Advanced Institute of Science and Technology, 305-701 Daejeon, Republic of Korea.}
\footnotetext[2]{National Institute for Mathematical Sciences, 305-811 Daejeon, Republic of Korea \\({\tt imbosim@nims.re.kr}).}

%


\begin{abstract}
We consider the approximation of eigenvalue problems for elasticity equations with interface. This kind of problems can be efficiently discretized by using immersed finite element method (IFEM) based on Crouzeix-Raviart P1-nonconforming element. The stability and the optimal convergence of IFEM for solving eigenvalue problems with interface are proved by adapting spectral analysis methods for the classical eigenvalue problem. Numerical experiments demonstrate our theoretical results.
\end{abstract}

\begin{keywords}
immersed finite element method; elasticity problems; eigenvalue
\end{keywords}

\pagestyle{myheadings}
\thispagestyle{plain}

\section{ Introduction }
\indent In this paper, we consider the approximation of eigenvalue problems with interface in elasticity. Eigenvalue analysis is essential basis for many types of engineering analysis. As eigenvalues are closely related with the frequency and shape of structures, computing the eigensolutions is important to interpret the dynamic interaction between the structures. If the frequency of structures is close to the system's natural frequency, mechanical resonance occurs. It may lead to catastrophic failure or damage in constructed structures such as bridges, buildings, and towers \cite{Green-Unruh}.

There have been mathematical studies of finite element methods for eigenvalue problems. In \cite{Trefethe-Betcke} various computed examples for Laplacian eigenproblems in planar regions are studied and there are references to physical problems where the results are relevant. For nonconforming approximation of elliptic eigenvalue problems, it is shown that the eigenvalues computed by finite element methods give lower bounds of the exact eigenvalues whose eigenfunctions are singular in non-convex polygon \cite{Armentano-Duran}. The guaranteed lower and upper bounds of eigenvalues based on the nonconforming finite element approximation are given in \cite{Carstensen-Gedicke}. Moreover, let us focus on eigenvalue problems in elasticity. A posteriori error estimator for linearized elasticity eigenvalue problems is studied in \cite{Walsh-Reese-Hetmaniuk}. It is shown that upper and lower estimates for the error of eigenpairs are established in terms of a residual estimate and lower-order terms. In \cite{Oden-Prudhomme}, a method for three-dimensional linear elasticity or shell problems is presented to derive computable estimates of the approximation error in eigenvalues. The spectral problem for the linear elasticity equations on curved non-convex domains, as well as with mixed boundary conditions is considered in \cite{Hernandez}. Meddahi et al. \cite{Meddahi-Mora-Rodri} present an analysis for the eigenvalue problem of linear elasticity by means of a mixed variational formulation. This method weakly imposes the symmetry of the stress tensor and is free from the locking phenomenon.

When elastic body is occupied by heterogeneous materials, it is known that governing equations contain the discontinuous material parameters along the interface of materials. To simulate such problems by finite element methods, a common strategy is to use fitted meshes along the interface. However, this strategy may require a very fine mesh near the interface. An alternative approach, proposed in \cite{Chang-Kwak,Chou-Kwak-Wee,Kwak-W-C,Li-Lin-Lin-Rogers,Li-Lin-Wu}, is an immersed finite element method (IFEM) which can use any meshes independent of interface geometry. The idea of an IFEM is to construct local basis functions to satisfy the interface conditions. For source problems with interface in elasticity, Kwak et al.\cite{Kwak-Jin-Kyeong} present a nonconforming IFEM based on the broken Crouzeix-Raviart (CR) element \cite{Crouzeix-Raviart}. They prove optimal error estimates and provide numerical results for compressible and nearly incompressible materials. Computation results of IFEM based on the rotated $Q_1$-nonconforming element are reported in \cite{Lin-Sheen-Zhang} and the related work in this direction can be found in \cite{Lin-Zhang}. In addition, the spectral analysis of IFEM for elliptic eigenvalue problems with an interface is given in \cite{Lee-Kwak-Sim}.

In this work, we analyze the spectral approximation of elasticity interface problems using $P_1$-nonconforming IFEM and derive the optimal convergence of eigenvalues. Moreover, we provide a series of numerical results of the eigenproblems with various shapes of interface for compressible and incompressible materials. As a model problem, we consider an elasticity eigenvalue problem where the domain is separated into two subdomains by interface. The elastic modulus of the material in each subdomain is discontinuous along the interface and the eigenfunctions must satisfy certain interface conditions. We construct local basis functions to satisfy the jump conditions across the interface. Also our local basis functions are based on CR element. It is known that CR element does not lock on pure displacement problems \cite{Brenner_Sung}. For a traction boundary problem, the discrete scheme with a stabilization term is introduced to overcome locking \cite{P.Hansbo_Lar2002}. Since interface conditions are related to traction conditions, IFEM based on CR elements does not suffer the effects of locking by introducing the stabilization term. Furthermore, optimal orders of convergence in the $H^1$ and $L^2$-norms for IFEM are proved in \cite{Kwak-Jin-Kyeong}. Exploiting the ideas of \cite{Kwak-Jin-Kyeong} we formulate the discrete scheme with a stabilization term. The proofs for the spectral correctness of IFEM are based on the analysis of \cite{Babuska-Osborn,Descloux-Nassif-Rappaz1978-1,Descloux-Nassif-Rappaz1978-2,Osborn}. Introducing a solution operator, we use spectral properties of compact and self-adjoint operators in Banach space \cite{Alonso-Russo,Antonietti-Buffa-Perugia,Beattie,Buffa-Perugia}. In our analysis we adapt the approximation properties of IFEM from \cite{Kwak-Jin-Kyeong} to establish the spectral analysis of IFEM. Our proofs for such spectral approximation are very similar to the proofs of \cite{Lee-Kwak-Sim} which introduced IFEM to an elliptic eigenvalue problem with an interface.

The outline of this paper is as follows. In the next section, we give a description of elasticity eigenvalue problems with interface. In Section 3, we introduce a local basis function satisfying interface conditions and formulate an immersed finite element method with a stabilization term. Section 4 is devoted to the analysis of the spectral approximation which is proved to be spurious-free. In Section 5, we carry out numerical experiments for our model problem. The results demonstrate spurious-free and locking-free character of IFEM.

\section{Model problem}

Let $\Omega$ be a connected and convex polygonal domain in $\RR^2$ which is divided into two subdomains $\Omega^+$ and $\Omega^-$ by a $C^2$ interface $\Gamma = \partial \Omega^+ \cap \partial\Omega^- $ (see Figure \ref{fig:domain1}). We assume that the subdomains $\Omega^+$ and $\Omega^-$ are occupied by two different elastic materials.
Let $\lambda$ and $\mu$ denote the Lam\'{e} coefficients given by
$$\lambda = \frac{E \nu}{(1+\nu)(1-2\nu)},\,\, \mu = \frac{E}{2(1+\nu)},$$
where $E$ is the Young's modulus and $\nu$ is the Poisson ratio. We note that the coefficients $\lambda$ and $\mu$ are $0<\mu_1 <\mu <\mu_2$ and $0<\lambda<\infty$. The constitutive equation is related to the displacement field $\bu := (u_{i})\in \RR^2$ and the Cauchy stress tensor $\bsigma := (\sigma_{ij}) \in \RR^{2\times2}$ is given by
$$\bsigma(\bu) = 2\mu\, \bepsilon(\bu) + \lambda\, tr(\bepsilon(\bu))\boldsymbol{I},$$
where $\boldsymbol{I}$ is the identity matrix of $\RR^{2\times2}$, the linearized strain tensor $\bepsilon := (\epsilon_{ij}) \in \RR^{2\times2}$ is
$$\bepsilon(\bu) = \frac{1}{2}(\nabla \bu + {\nabla \bu}^T),$$
and the usual trace operator $tr(\bepsilon)$ is
$$ tr(\bepsilon) := \sum^{2}_{i=1} \epsilon_{ii}.$$
For the sake of simplicity, we assume that the density $\rho$ is a positive piecewise constant in subdomains $\Omega^+$ and $\Omega^-$. From now on, we consider the Lam\'{e} coefficients $\lambda$ and $\mu$ as $\lambda := \lambda / \rho$ and $\mu := \mu / \rho$. Let us consider the eigenvalue problem for the linear elasticity equation with interface, i.e.
\begin{eqnarray}\label{eq:Model}
-\Div\, \bsigma(\bu)  &=& \omega^2\bu \quad \mathrm{   in}~  \Omega^s \quad (s=+,-), \\
{[\bu]}_\Gamma &=& 0,  \label{jump2.2} \\
{[\bsigma(\bu)\cdot \bn]}_\Gamma&=&0, \label{jump2.3} \\
\bu &=& 0  \quad\quad\; \mathrm{ on} \;  \partial\Omega, \nonumber
\end{eqnarray}
where $\omega^2$ and $\bu$ are the corresponding eigenvalue and eigenfunction, and the symbol $[\cdot]$ denotes the jump across the interface $\Gamma$.

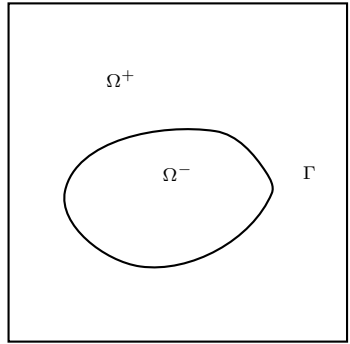
\begin{figure}[ht]
\begin{center}
      \psset{unit=2.5cm}
      \begin{pspicture}(-1,-1)(1,1)
        \pspolygon(0.9,0.9)(-0.9,0.9)(-0.9,-0.9)(0.9,-0.9)
        \psccurve(0.47,0) (0.2,0.22)(-0.6,-0.1)(-0.2,-0.5)(0.5,-0.11)
        \rput(0,0){\scriptsize$\Omega^-$}
        \rput(-0.3,0.5){\scriptsize$\Omega^+$}
        \rput(0.7,0){\scriptsize$\Gamma$}
      \end{pspicture}
\caption{A domain $\Omega$ with interface} \label{fig:domain1}
\end{center}
\end{figure}

 We formulate the model problem (\ref{eq:Model}) into the displacement formulation \cite{Braess}. Multiplying $\bv \in (H^1_0 (\Omega))^2$ and applying Green's identity to model problem (\ref{eq:Model}) in each domain $\Omega^s$, we obtain
\begin{equation*} \label{variational_form}
\int_{\Omega^s} 2 \mu\, \bepsilon(\bu):\bepsilon(\bv)dx + \int_{\Omega^s} \lambda\, \Div\, \bu\,\Div\, \bv\,dx - \int_{\partial\Omega^s}\bsigma(\bu) \mathbf{n} \cdot \bv ds = \omega^2\int_{\Omega^s} \bu \cdot \bv dx,
\end{equation*}
where 
\begin{equation*}
\bepsilon(\bu):\bepsilon(\bv) = \sum_{i,j=1}^2 \epsilon_{ij}(\bu) \epsilon_{ij} (\bv).
\end{equation*}
Summing over $s=+,-$ and applying the interface condition (\ref{jump2.3}), we have the following weak formulation
\begin{equation} \label{eq:variational_form}
a(\bu,\bv) =  \omega^2(\bu,\bv),
\end{equation}
where
\begin{equation*} \label{a_form}
a(\bu,\bv) = \int_{\Omega} 2 \mu\, \bepsilon(\bu):\bepsilon(\bv)dx + \int_{\Omega} \lambda\, \Div\, \bu\,\Div\, \bv\,dx
\end{equation*} and
\begin{equation*}
\omega^2(\bu,\bv)= \omega^2\int_{\Omega} \bu \cdot \bv dx.
\end{equation*}

\section{Immersed finite element method}
In this section, we introduce an immersed finite element method (IFEM) based  on Crouzeix-Raviart elements \cite{Crouzeix-Raviart}. Let $\{\mathcal{K}_h\}$ be the usual quasi-uniform triangulations of the domain $\Omega$ by the triangles of maximum diameter $h$. Note that an element $K \in \mathcal K_h$ is not necessarily aligned with the interface $\Gamma$. For a smooth interface, provided that $h$ is sufficiently small, we are able to assume that the interface intersects the edge of an element at no more than two points and joins each edge at most once, except possibly it passes through two vertices. We may replace $\Gamma\cap K$ by the line segment joining two intersection points on the edges of each $K\in \mathcal K_h$. We call an element $K\in\mathcal{K}_h$ an \textit{interface element} if the interface $\Gamma$ passes through the interior of $K$, otherwise $K$ is a \textit{non-interface element}. Additionally we introduce some symbols:
\renewcommand{\labelitemi}{$\cdot$}
\begin{itemize}
\item $\mathcal K_h^*$ - the collection of all interface elements
\item $\mathcal{E}_h\;$  - the collection of all the edges of $K\in \mathcal K_h$
\end{itemize}
 We are going to construct local basis functions on each element $K$ of the triangulation  $\mathcal K_h$. For a non-interface element $K\in \mathcal K_h$, we choose a standard $P_1$-nonconforming basis whose degrees of freedom are determined by average values on each edge of an element $K$. Let $\bN_h(K)$ denote the linear space spanned by the six Lagrange basis functions
$$\bphi_i = (\phi_{i1},\phi_{i2})^T,\ i=1,2,\cdots,6, $$
satisfying
\begin{eqnarray*}
\frac{1}{|e_j|}\int_{e_j}\phi_{i1}\, ds &=& \delta_{ij},\\
\frac{1}{|e_j|}\int_{e_j}\phi_{i2}\, ds &=& \delta_{i-3,j},
\end{eqnarray*}
for each edge $e_j$ of an element $K$, $j=1,2,3$. The $P_1$-nonconforming space $\bN_h(\Omega)$ is given by
$$ \bN_h(\Omega)= \left\{
\begin{aligned}
 &\bphi = (\phi_1,\phi_2)|_K\in \bN_h(K)\mbox{ for each } K\in\mathcal K_h \setminus \mathcal K_h^* ; \\
 & \mbox{if $K_1,K_2 \in \mathcal K _h$ share an edge $e$,}\mbox{ then, for $i=1,2$,}\\
 &\int_{e}{\phi_i}|_{\partial K_1} ds= \int_{e}{\phi_i}|_{\partial K_2}  ds; \mbox{ and }
  \int_{\partial K \cap \partial\Omega}{\phi_i}\,ds=0
\end{aligned}
\right\}.$$

\begin{figure}[ht]
  \begin{center}
    \psset{unit=3.0cm}
    \begin{pspicture}(0,0)(1,1)
      \psset{linecolor=black} \pspolygon(0,0)(1,0)(0,1) \psline(0,0.65)(0.35,0)
      \pscurve(0,0.65)(0.1,0.58)(0.2,0.15)(0.35,0)
      \rput(0,1.05){\scriptsize$A_3$}
      \rput(-0.05,0){\scriptsize$A_1$}
      \rput(1.05,0){\scriptsize$A_2$}
      \rput(0.5,-0.13){$e_3$}
      \rput(0.55,0.55){$e_1$}
      \rput(-0.06,0.5){$e_2$}
      \rput(-0.06,0.65){\scriptsize$E$}
      \rput(0.08,0.12){\scriptsize$K^-$}
      \rput(0.45,0.25){\scriptsize$K^+$}
      \rput(0.35,-0.05){\scriptsize$D$}
      \rput(0.20,0.48){\scriptsize$\Gamma$}
\pnode(-.3,0.6){a}
\pnode(0.12,0.5){b}
%
    \end{pspicture}
    \caption{A typical interface triangle} \label{fig:interel}
\end{center}
\end{figure}
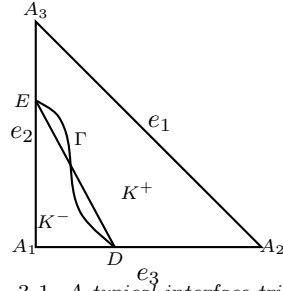
For an interface element $K \in \mathcal K_h$ (see Figure \ref{fig:interel}), we describe how to construct the basis functions which satisfy the interface conditions (\ref{jump2.2}), (\ref{jump2.3}). The piecewise linear basis function $\what \bphi_i$, $i=1,2,\cdots,6 $, of the form
\begin{eqnarray*}
 &&\what\bphi_i(x,y) = \left\{%
 \begin{array}{ll}
    \what\bphi^+_i (x,y) =
    \begin{pmatrix}
     \hat\phi^+_{i1} \\  \hat\phi^+_{i2}
    \end{pmatrix} = \begin{pmatrix} a_0^+ + b_0^+ x + c_0^+ y \\
    a_1^+ + b_1^+ x + c_1^+ y \end{pmatrix}, \quad (x,y) \in T^+,  \\
     &\\
    \what\bphi^-_i (x,y) =\begin{pmatrix}
    \hat \phi^-_{i1} \\  \hat\phi^-_{i2}
    \end{pmatrix} = \begin{pmatrix} a_0^- + b_0^- x + c_0^- y  \\
    a_1^- + b_1^- x + c_1^- y \end{pmatrix}, \quad (x,y) \in T^-, \\
\end{array}%
\right. \label {def:basis-1}
\end{eqnarray*}  satisfies

\begin{eqnarray*}\label{eq:dof1}
\frac{1}{|e_j|}\int_{e_j} \hat\phi_{i1}\,ds & = & \delta_{ij},\,\, j=1,2,3,\\
\frac{1}{|e_j|}\int_{e_j} \hat\phi_{i2}\,ds & = & \delta_{(i-3)j},\,\, j=1,2,3,\label{eq:dof2}\\
{[\what \bphi_i(D)]} & = & 0, \\
{[\what \bphi_i(E)]} & = & 0, \\
\left[\bsigma(\what\bphi_i) \cdot \mathbf{n}\right]_{\overline{\textrm{\tiny{$DE$}}}} & = & 0\label{eq:trac}.
\end{eqnarray*}
We can express these conditions as a square system of linear equations in twelve unknowns for each basis function $\what\bphi_i$. It is shown that this system has a unique solution regardless of the location of the interface (see \cite{Kwak-Jin-Kyeong}). Let us denote $\what{\bN}_h(K)$ as the space of functions on an interface element $K$, which is generated by $\what{\bphi}_i,\, i = 1,2,\cdots,6$. Using this local finite element space, we define the global {\em immersed finite element space} $\what{\bN}_h (\Omega)$
by
$$ \what{\bN}_h (\Omega)=
\left\{\begin{array}{l}
\what \bphi \in \what{\bN}_h(K) \mbox{ if } T\in \mathcal{K}_h^*, \mbox{ and }  \what \bphi \in \bN_h(K) \mbox{ if } K \not\in \mathcal{K}_h^*; \\
\mbox{ if $K_1$ and $K_2$ share an edge $e$, then $\what \bphi =(\hat\phi_1, \hat\phi_2)$ satisfies, }\\
  \int_{e}{\hat\phi_i}|_{\partial K_1} ds= \int_{e}{\hat\phi_i}|_{\partial K_2}  ds; \mbox{ and }
  \int_{\partial K \cap \partial\Omega}{\hat\phi_i}\,ds=0,\, (i=1,2)
 \end{array}
\right\}. $$

In order to describe analysis of IFEM, we introduce some spaces and their norms. For a bounded domain $D$ and non-negative integer $m$, we let $H^m(D) = W_2^m(D)$ be the usual Sobolev space of order $m$ with (semi)-norms denoted by $\|\cdot\|_{m,D}$ ($|\cdot|_{m,D}$) and let
\begin{eqnarray*}
(\wtilde{H}^m(D))^2 &:=& \{\,\bu\in (H^{m-1}(D))^2 : \,\bu|_{D\cap \Omega^s}\in
(H^m(D\cap \Omega^s))^2, s = +,-\,\},
\end{eqnarray*}
equipped with norms
\begin{eqnarray*}
|\bu|^2_{\wtilde{H}^m(D)} &:=& |\bu|^2_{m,D\cap \Omega^+} + |\bu|^2_{m,D\cap \Omega^-},\\
\|\bu\|^2_{\wtilde{H}^m(D)} &:=& \|\bu\|^2_{m,D\cap \Omega^+} + \|\bu\|^2_{m,D\cap \Omega^-}.
\end{eqnarray*}
In addition, we define the space $\bH_h(\Omega)$ by $\bH_h(\Omega) := (H^1_0(\Omega))^2 + \what \bN_h(\Omega)$.

The IFEM for the eigenvalue problem (\ref{eq:Model}) is to find the eigensolution $(\omega_h^2, \bu_h) \in \mathbb{C} \times \what{\bN}_h(\Omega)$ such that
\begin{equation}\label{eq:disc-form}
a_h(\bu_h,\bv_h) = \omega_h^2(\bu_h,\bv_h), ~\quad ~~ \forall \bv_h \in \what{\bN}_h (\Omega),
\end{equation}
where
\begin{align}\label{a_h_form}
a_h(\bu,\bv):&=\sum_{K\in \mathcal{K}_h}\int_K 2 \mu\, \bepsilon(\bu):\bepsilon(\bv)dx +
 \sum_{K\in \mathcal{K}_h}\int_K  \lambda\, \Div\, \bu\,\Div\, \bv\,dx  \\
& + \sum_{e \in \mathcal{E}_h} \frac{\tau}{h} \int_{e} [\bu][\bv]ds , \quad \forall \bu,\bv \in \bH_h(\Omega). \nonumber
\end{align}
The parameter $\tau$ in the bilinear form $a_h(\cdot,\cdot)$ is a positive constant which is independent of the mesh size $h$. We define the mesh dependent norm $\|\cdot\|_{a,h}$ on the space $\bH_h(\Omega)$ by
\begin{equation*} \label{Energy_norm}
\|\bv\|^2_{a,h}:= \sum_{K\in\mathcal{K}_h}\|\bv\|_{a,K}^2+\sum_{e\in \mathcal{E}_h} \int_{e} \frac{\tau}{h}[\bv]^2 ds,
\end{equation*}
where
\begin{equation*} \label{Energy_norm-T}
\|\bv\|_{a,K}^2=\int_{K} 2\mu\, \bepsilon(\bv):\bepsilon(\bv) dx +\int_{K} \lambda\, |\Div\, \bv|^2 dx.
\end{equation*}
\begin{remark}
The idea of the discrete scheme is motivated from Hansbo and Larson \cite{P.Hansbo_Lar2002}. For a source problem without an interface, they prove an optimal convergence of the scheme. For the problem with an interface, Kwak et al. \cite{Kwak-Jin-Kyeong} show the scheme yields an optimal result.
\end{remark}

The coerciveness and boundedness of the bilinear form $a_h(\cdot, \cdot)$ are satisfied \cite{Kwak-Jin-Kyeong}.
\begin{theorem}
There exist positive constants $C_b$ and $C_c$ such that
\begin{alignat*}{2}
|a_h (\bu,\bv)| &\leq C_b\|\bu\|_{a,h}\|\bv\|_{a,h}, \quad &\forall\, \bu,\bv \in \bH_h(\Omega), \\
a_h (\bv,\bv) &\geq  C_c\|\bv\|^2_{a,h}, 	 &\forall\, \bv \in \what \bN_h(\Omega).
\end{alignat*}
\end{theorem}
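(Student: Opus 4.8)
The plan is to exploit the fact that $a_h(\cdot,\cdot)$ and $\|\cdot\|_{a,h}$ are assembled from the same three ingredients, so that the form is in truth a symmetric positive semidefinite bilinear form whose induced seminorm is exactly $\|\cdot\|_{a,h}$. First I would record the symmetry of $a_h$: each of its three pieces --- the scaled strain pairing $2\mu\,\bepsilon(\bu):\bepsilon(\bv)$, the dilation pairing $\lambda\,\Div\bu\,\Div\bv$, and the edge penalty $\tfrac{\tau}{h}[\bu][\bv]$ --- is symmetric in $\bu,\bv$. Evaluating on the diagonal $\bu=\bv$ and comparing with the definitions of $\|\bv\|_{a,K}^2$ and the jump contribution yields the identity
\begin{equation*}
a_h(\bv,\bv)=\sum_{K\in\mathcal{K}_h}\|\bv\|_{a,K}^2+\sum_{e\in\mathcal{E}_h}\int_e\frac{\tau}{h}[\bv]^2\,ds=\|\bv\|_{a,h}^2,\qquad\forall\,\bv\in\bH_h(\Omega),
\end{equation*}
valid because $\bepsilon(\bv):\bepsilon(\bv)\ge0$, $|\Div\bv|^2\ge0$ and $[\bv]^2\ge0$ with $\mu,\lambda,\tau>0$. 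In particular $a_h(\bv,\bv)\ge0$, so $a_h$ is a positive semidefinite symmetric bilinear form on $\bH_h(\Omega)$.

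Coercivity is then immediate: the displayed identity already gives $a_h(\bv,\bv)=\|\bv\|_{a,h}^2$, so the lower bound holds with $C_c=1$. The restriction to $\what\bN_h(\Omega)$ is needed not for this inequality but to guarantee that $\|\cdot\|_{a,h}$ is a genuine norm there; I would invoke the discrete Korn inequality for Crouzeix--Raviart spaces together with the edge-penalty control of the jumps (as in \cite{Brenner_Sung,Kwak-Jin-Kyeong}) to rule out nontrivial elements of the kernel, that is, to ensure positive definiteness on the discrete space.

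For boundedness I would use the Cauchy--Schwarz inequality associated with the semi-inner product $a_h$. Since $a_h$ is symmetric and positive semidefinite with $a_h(\bv,\bv)=\|\bv\|_{a,h}^2$, the standard Cauchy--Schwarz estimate for semi-inner products gives
\begin{equation*}
|a_h(\bu,\bv)|\le a_h(\bu,\bu)^{1/2}\,a_h(\bv,\bv)^{1/2}=\|\bu\|_{a,h}\,\|\bv\|_{a,h},\qquad\forall\,\bu,\bv\in\bH_h(\Omega),
\end{equation*}
so the upper bound holds with $C_b=1$. Alternatively one applies Cauchy--Schwarz termwise to each of the three pieces (using the $\mu$-weighted and $\lambda$-weighted $L^2$ pairings on each $K$ and the $\tfrac{\tau}{h}$-weighted $L^2$ pairing on each $e$) and then the discrete Cauchy--Schwarz inequality over the sums; the upper bounds $\mu<\mu_2$ and $\lambda<\infty$ ensure every term is finite.

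I do not anticipate a genuine obstacle in the estimates themselves: once the diagonal identity is observed, both inequalities collapse to the algebra of a semi-inner product, with optimal constants $C_c=C_b=1$. The only point requiring care is the justification that $\|\cdot\|_{a,h}$ is a norm rather than merely a seminorm on $\what\bN_h(\Omega)$, which is precisely where a discrete Korn / discrete Poincar\'e argument for the nonconforming space enters; this is the step I would isolate and cite rather than re-derive.
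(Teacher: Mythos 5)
Your proof is correct, and in fact the paper offers no proof of its own for this theorem: it is simply quoted from \cite{Kwak-Jin-Kyeong}, so your self-contained argument is a genuinely different (and more elementary) route than deferring to that reference. Your key observation is exactly right: with $\|\cdot\|_{a,h}$ \emph{defined} as the diagonal of the stabilized form, one has the identity
\begin{equation*}
a_h(\bv,\bv)=\sum_{K\in\mathcal{K}_h}\|\bv\|_{a,K}^2+\sum_{e\in\mathcal{E}_h}\int_e\frac{\tau}{h}[\bv]^2\,ds=\|\bv\|_{a,h}^2 ,
\end{equation*}
since $a_h$ as written in (\ref{a_h_form}) contains no Nitsche-type consistency terms, only the symmetric penalty; Cauchy--Schwarz for the resulting symmetric positive semidefinite form then yields both inequalities with the optimal constants $C_b=C_c=1$, and the coercivity bound actually holds on all of $\bH_h(\Omega)$, not merely on $\what\bN_h(\Omega)$. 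You are also right that the only delicate point at face value --- whether $\|\cdot\|_{a,h}$ is a norm rather than a seminorm on the discrete space --- is not needed for either stated inequality; a discrete Korn argument \`a la \cite{Brenner_Sung} enters only if one wants definiteness, or, more to the point, if one wants what the cited reference actually proves: coercivity of $a_h$ with respect to a \emph{full} broken $H^1$-type norm controlling $\nabla\bv$ elementwise and not just $\bepsilon(\bv)$. That stronger statement is the nontrivial content behind the theorem as used downstream (it underlies the approximation results feeding Theorem \ref{thm:oper-conv}), and it genuinely requires a Korn-type inequality for the penalized nonconforming/immersed space with $\tau>0$; your version buys sharp constants and complete self-containedness for the theorem exactly as printed, while the reference's version buys the norm equivalence that the error analysis relies on. As stated in this paper, though, the theorem is precisely the triviality you identified, and your proof of it is complete.
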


\section{Spectral approximation}
To analyze the spectral approximation, we introduce the solution operator $T:(L^2(\Omega))^2 \to (H^1_0(\Omega))^2$, which associates the solution $T\bff \in (H^1_0(\Omega))^2$ of the following source problem with every $\bff \in (L^2(\Omega))^2$:
\begin{equation*}
a(T\bff,\bv) =  (\bff,\bv), 	\quad \forall \bv \in (H^1_0(\Omega))^2.
\end{equation*}
The operator $T$ is well-defined because unique solvability for every $\bff \in (L^2(\Omega))^2$ is shown in \cite{A.Hansbo-P.Hansbo,Leguillon-Sanchez}. It is clear that the operator $T$ is bounded, self-adjoint and compact. In view of the definition of the solution operator $T$, if $(\omega^2, \bu) \in \mathbb{C}\setminus \{0\} \times (H^1_0(\Omega))^2$ is an eigenpair of (\ref{eq:variational_form}), then $(1/\omega^2, \bu)$ is an eigenpair for the operator $T$. In a similar way, we can define the corresponding discrete solution operator $T_h : (L^2(\Omega))^2 \to \bN_h(\Omega)$ by
\begin{equation*}
a_h(T_h\bff, \bv_h) = (\bff, \bv_h), \quad \forall \bv_h \in \what{\bN}_h(\Omega)
\end{equation*}
with $\bff \in (L^2(\Omega))^2$. Clearly, $T_h$ is also a bounded and self-adjoint operator. Notice that an eigenvalue $\xi_h$ of the operator $T_h$ is given by $\xi_h = 1/\omega_h^2$ where $\omega_h^2$ is an eigenvalue of the discrete problem (\ref{eq:disc-form}).

Before we show the uniform convergence of $T_h$ to $T$, we state some assumptions which are suggested to analyze the IFEM for the source problem associated with (\ref{eq:Model}) in \cite{Kwak-Jin-Kyeong}.
\renewcommand{\labelitemi}{$\bullet$}
\begin{itemize}
\item {(H1)}. There exists a constant $C>0$ such that
$$2\mu\|T\bff \|_{\wtilde{H}^2(\Omega)}+\lambda\|\Div T\bff\|_{\wtilde{H}^1(\Omega)} \leq C\|\mathbf{f}\|_{0,\Omega}.$$
\item {(H2)}. $\sigma_{ij}(T\bff) \in H^1(\Omega)$.
\end{itemize}
In fact, the hypothesis {(H1)} implies the regularity estimate which is known when the Lam\'{e} coefficients are continuous on the domain \cite{P.Hansbo_Lar2002}. On the other hand, such estimate for the interface problems is not available to the best of authors' knowledge. The hypothesis {(H2)} is required to analyze the consistency error of the scheme (\ref{a_h_form}). From now on, we assume the hypotheses {(H1)} and {(H2)}.

The following theorem \cite{Kwak-Jin-Kyeong} states the uniform convergence of $T_h$ to $T$ which plays an important role in spectral approximation. 
\begin{theorem} \label{thm:oper-conv}
There exists a constant $C > 0$ such that
\begin{equation*} 
\|T\bff- T_h\bff \|_{0,\Omega}+h\|T\bff-  T_h\bff\|_{a,h}\leq C h^2\|\bff\|_{0,\Omega}, \quad \forall \bff \in (L^2(\Omega))^2.
\end{equation*}
\end{theorem}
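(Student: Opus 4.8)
The plan is to establish the error estimate by reducing the eigenvalue/source-operator convergence to a standard finite element error analysis for the fixed source problem, exploiting that $T_h$ is defined exactly as the Galerkin projection associated with the discrete bilinear form $a_h$. Fix $\bff \in (L^2(\Omega))^2$ and write $\bu := T\bff$ and $\bu_h := T_h\bff$. Since the scheme is nonconforming and involves the stabilization term on edges, I would first derive a \emph{Strang-type lemma} quantifying the energy-norm error $\|\bu - \bu_h\|_{a,h}$ in terms of an approximation error and a consistency error. Concretely, using the coercivity of $a_h$ on $\what\bN_h(\Omega)$ (the second estimate of Theorem~4.1) together with the boundedness (the first estimate), for any $\bv_h \in \what\bN_h(\Omega)$ one gets
\begin{equation*}
C_c \|\bu_h - \bv_h\|_{a,h}^2 \le a_h(\bu_h - \bv_h, \bu_h - \bv_h)
= a_h(\bu - \bv_h, \bu_h - \bv_h) - \big[a_h(\bu,\bu_h-\bv_h) - (\bff,\bu_h-\bv_h)\big],
\end{equation*}
so that after dividing and applying the triangle inequality,
\begin{equation*}
\|\bu - \bu_h\|_{a,h} \le C \inf_{\bv_h \in \what\bN_h(\Omega)} \|\bu - \bv_h\|_{a,h}
+ C \sup_{\bw_h \in \what\bN_h(\Omega)} \frac{|a_h(\bu, \bw_h) - (\bff, \bw_h)|}{\|\bw_h\|_{a,h}}.
\end{equation*}

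The first term is the \emph{approximation error}: here I would invoke the interpolation properties of the immersed space $\what\bN_h(\Omega)$ established in~\cite{Kwak-Jin-Kyeong}, which (under the regularity hypothesis (H1)) yield $\inf_{\bv_h}\|\bu-\bv_h\|_{a,h} \le C h \|\bu\|_{\wtilde H^2(\Omega)} \le C h \|\bff\|_{0,\Omega}$. The second term is the \emph{consistency error} arising from nonconformity. Testing the strong form $-\Div\bsigma(\bu) = \bff$ against a discrete function $\bw_h$ and integrating by parts element by element, the volume terms reproduce $a_h$ up to the jump/stabilization contributions, leaving a sum over edges of boundary integrals $\sum_e \int_e \bsigma(\bu)\bn \cdot [\bw_h]\,ds$. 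This is precisely where hypothesis (H2), namely $\sigma_{ij}(T\bff)\in H^1(\Omega)$, is needed: it lets me subtract edge-averages of $\bw_h$ (which vanish by the degree-of-freedom definition of $\what\bN_h(\Omega)$) and then apply a trace/scaling argument to bound each edge term by $Ch|\bsigma(\bu)|_{1}\|\bw_h\|_{a,h}$, giving an overall $O(h)$ consistency bound. Combining the two pieces gives the energy estimate $\|\bu - \bu_h\|_{a,h} \le C h \|\bff\|_{0,\Omega}$, which is the $h\|T\bff - T_h\bff\|_{a,h}$ half of the claim.

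For the $L^2$ estimate I would run an \emph{Aubin--Nitsche duality argument}. Introduce the auxiliary dual problem $a(\bphi, \bv) = (\bu - \bu_h, \bv)$ for all $\bv$, whose solution $\bphi = T(\bu-\bu_h)$ again enjoys the (H1) regularity with $\|\bphi\|_{\wtilde H^2(\Omega)} \le C\|\bu-\bu_h\|_{0,\Omega}$. Setting $\bv = \bu - \bu_h$ and inserting the discrete approximation $\bphi_h$ of $\bphi$, one rewrites $\|\bu-\bu_h\|_{0,\Omega}^2$ as a Galerkin-orthogonality remainder plus consistency terms for both the primal and dual solutions; each term carries one extra factor of $h$ relative to the energy estimate, producing $\|\bu - \bu_h\|_{0,\Omega} \le C h^2 \|\bff\|_{0,\Omega}$.

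The main obstacle I anticipate is the careful treatment of the consistency/nonconformity error on interface elements, where $\bsigma(\bu)\bn$ is only piecewise smooth across $\Gamma$ and the basis functions $\what\bphi_i$ are themselves broken across the interface segment $\overline{DE}$. Controlling the edge terms there requires combining the interface jump condition~(\ref{jump2.3}) built into the construction of $\what\bN_h(\Omega)$ with the $\wtilde H^2$-type regularity of (H1) and the stabilization contribution, rather than relying on global $H^2$ smoothness; this is exactly the delicate estimate that~\cite{Kwak-Jin-Kyeong} carries out for the source problem, and I would cite and adapt it rather than reprove it in full.
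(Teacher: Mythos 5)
The paper itself contains no proof of Theorem~\ref{thm:oper-conv}: the estimate is quoted directly from \cite{Kwak-Jin-Kyeong}, where the full source-problem error analysis is carried out. Your outline --- a second-Strang-lemma splitting into approximation and consistency parts, the IFEM interpolation estimate under (H1), the consistency bound exploiting (H2) together with the vanishing edge-averages of $[\bw_h]$ and the stabilization term, and an Aubin--Nitsche duality argument for the $L^2$ half --- is precisely the argument of that cited reference, so your proposal is correct and takes essentially the same route as the proof the paper relies on.
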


We are going to state the theoretical results of spectral approximation within the framework of \cite{Babuska-Osborn,Descloux-Nassif-Rappaz1978-1,Descloux-Nassif-Rappaz1978-2}. Most proofs of theorems stated below are analogous to \cite{Lee-Kwak-Sim} which deals with the IFEM for elliptic eigenvalue problems. Let us introduce some notations for theoretical results. To state the convergence of operators, we introduce an operator norm $\|L\|_{\mathscr{L}(X,Y)}$ for a bounded linear operator $L : X \to Y$ by
\begin{equation}
\|L\|_{\mathscr{L}(X,Y)} = \sup_{x \in X} \frac{\|Lx\|_Y}{\|x\|_X}. \label{eq:oper-norm}
\end{equation}
The distance between eigenspaces is evaluated by means of distance functions
$$ \mathrm{dist}_h (x, Y) \;= \inf_{y \in Y}\|x - y\|_{a,h}, \quad \mathrm{dist}_h(X,Y) \:= \sup_{x \in X, \|x\|_{a,h} = 1}\mathrm{dist}_h(x,Y), $$
where $X$ and $Y$ are closed subspaces of $\bH_h(\Omega)$. We denote by $\sigma(T)$ and $\rho(T)$ ($\sigma(T_h)$ and $\rho(T_h)$) the spectrum and resolvent set of the solution operator $T$ (resp. $T_h$), respectively. For any $z\in \rho(T)$, the resolvent operator $R_z(T)$ is defined by $R_z(T) = (z-T)^{-1}$ from $(L^2(\Omega))^2$ to $(L^2(\Omega))^2$ or from $(H^1_0(\Omega))^2$ to $(H^1_0(\Omega))^2$ and the discrete resolvent operator $R_z(T_h)$ is defined by $R_z(T_h) = (z-T_h)^{-1}$ from $\bH_h(\Omega)$ and $\bH_h(\Omega)$ \cite{Kato}.

To show that the resolvent operators $R_z(T)$ and $R_z(T_h)$ are well-defined and bounded, we introduce the following theorem.
\begin{theorem}
For $z\in \rho(T)$, $z\neq 0$ and $h$ small enough, there are constants $C_1,\,C_2>0$ depending on only $\Omega$ and $|z|$ such that
\begin{equation} \label{eq:resolBdd1}
\|(z-T)\bff\|_{a,h} \geq C_1\|\bff\|_{a,h}, \quad \forall \bff \in \bH_h(\Omega),
\end{equation}
\begin{equation} \label{eq:resolBdd2}
\|(z-T_h)\bff\|_{a,h} \geq C_2\|\bff\|_{a,h}, \quad \forall \bff\in \bH_h(\Omega).
\end{equation}
\label{thm:resolBdd}
\end{theorem}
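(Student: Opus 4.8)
The plan is to prove the continuous estimate (\ref{eq:resolBdd1}) first and then obtain the discrete estimate (\ref{eq:resolBdd2}) from it by a perturbation argument built on the uniform convergence of Theorem \ref{thm:oper-conv}. I would begin by isolating three $h$-independent facts. First, the $L^2$ resolvent bound: since $T$ is compact and self-adjoint on $(L^2(\Omega))^2$, for $z\in\rho(T)$ the operator $(z-T)^{-1}$ is bounded with $\|(z-T)^{-1}\|_{\mathscr{L}((L^2(\Omega))^2,(L^2(\Omega))^2)}=M:=1/\mathrm{dist}(z,\sigma(T))$, so $\|\bff\|_{0,\Omega}\le M\|(z-T)\bff\|_{0,\Omega}$. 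Second, the smoothing estimate $\|T\bff\|_{a,h}\le C\|\bff\|_{0,\Omega}$: because $T\bff\in(H^1_0(\Omega))^2$ is conforming, all of its jump terms vanish and $\|T\bff\|_{a,h}^2=a(T\bff,T\bff)=(\bff,T\bff)$, whence the bound follows from Cauchy--Schwarz and the coercivity/Poincar\'e inequality $\|T\bff\|_{0,\Omega}\le C\|T\bff\|_{a,h}$. Third, the broken Poincar\'e inequality $\|\bv\|_{0,\Omega}\le C_P\|\bv\|_{a,h}$ for $\bv\in\bH_h(\Omega)$, whose constant is uniform in $h$ for Crouzeix--Raviart-type spaces equipped with the jump penalty.

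For (\ref{eq:resolBdd1}), fix $\bff\in\bH_h(\Omega)$ and set $\bg=(z-T)\bff$. Since $T\bff\in(H^1_0(\Omega))^2\subset\bH_h(\Omega)$, the element $\bg$ lies in $\bH_h(\Omega)$ and the identity $z\bff=\bg+T\bff$ may be read off in the $\|\cdot\|_{a,h}$ norm. Using the smoothing estimate,
\begin{equation*}
|z|\,\|\bff\|_{a,h}\le\|\bg\|_{a,h}+\|T\bff\|_{a,h}\le\|\bg\|_{a,h}+C\|\bff\|_{0,\Omega}.
\end{equation*}
I would then control $\|\bff\|_{0,\Omega}$ by chaining the resolvent bound and the broken Poincar\'e inequality, $\|\bff\|_{0,\Omega}\le M\|\bg\|_{0,\Omega}\le MC_P\|\bg\|_{a,h}$, which gives $|z|\,\|\bff\|_{a,h}\le(1+CMC_P)\|\bg\|_{a,h}$. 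This is precisely (\ref{eq:resolBdd1}) with $C_1=|z|/(1+CMC_P)$, a constant depending only on $z$ and $\Omega$.

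For (\ref{eq:resolBdd2}), I would split $(z-T_h)\bff=(z-T)\bff+(T-T_h)\bff$ and apply the reverse triangle inequality together with (\ref{eq:resolBdd1}):
\begin{equation*}
\|(z-T_h)\bff\|_{a,h}\ge\|(z-T)\bff\|_{a,h}-\|(T-T_h)\bff\|_{a,h}\ge C_1\|\bff\|_{a,h}-\|(T-T_h)\bff\|_{a,h}.
\end{equation*}
By Theorem \ref{thm:oper-conv}, $\|(T-T_h)\bff\|_{a,h}\le Ch\|\bff\|_{0,\Omega}\le CC_Ph\|\bff\|_{a,h}$, so $\|(z-T_h)\bff\|_{a,h}\ge(C_1-C'h)\|\bff\|_{a,h}$ with $C'=CC_P$. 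Choosing $h$ small enough that $C'h\le C_1/2$ yields (\ref{eq:resolBdd2}) with $C_2=C_1/2$.

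The step I expect to require the most care is guaranteeing that every constant is genuinely independent of $h$, since this is what makes the bounds useful for the subsequent spectral analysis. The resolvent constant $M$ and the smoothing constant $C$ are purely continuous objects, but the broken Poincar\'e constant $C_P$ and the absorption in (\ref{eq:resolBdd2}) rely on mesh-uniform estimates for the nonconforming space; the latter is exactly where the hypothesis ``$h$ small enough'' enters, through the convergence rate furnished by Theorem \ref{thm:oper-conv}. A second point worth stating explicitly is the structural fact that $\|\cdot\|_{a,h}$ reduces to the conforming energy norm $a(\cdot,\cdot)^{1/2}$ on $(H^1_0(\Omega))^2$; this is what lets the smooth term $T\bff$ be estimated by $\|\bff\|_{0,\Omega}$ rather than by $\|\bff\|_{a,h}$, and it is the mechanism that breaks the otherwise circular dependence in the reverse triangle inequality.
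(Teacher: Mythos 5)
Your proposal is correct and, modulo presentation, is the same argument the paper invokes: the paper proves (\ref{eq:resolBdd1}) by citing Lemma 4.1 of \cite{Lee-Kwak-Sim}, whose proof is precisely your decomposition $z\bff=(z-T)\bff+T\bff$ combined with the $L^2$ resolvent bound for the compact self-adjoint $T$, the conforming smoothing estimate $\|T\bff\|_{a,h}\le C\|\bff\|_{0,\Omega}$, and the $h$-uniform broken Poincar\'e--Korn bound $\|\bv\|_{0,\Omega}\le C_P\|\bv\|_{a,h}$, and it obtains (\ref{eq:resolBdd2}) from (\ref{eq:resolBdd1}) together with Theorem \ref{thm:oper-conv} by exactly your absorption argument (this is Lemma 1 of \cite{Descloux-Nassif-Rappaz1978-1}). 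Your write-up simply makes explicit the proofs that the paper delegates to its references.
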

\begin{proof}
The proof of the first inequality (\ref{eq:resolBdd1}) is essentially identical to that of Lemma 4.1 from \cite{Lee-Kwak-Sim}. The second inequality (\ref{eq:resolBdd2}) follows from the first inequality (\ref{eq:resolBdd1}) and Theorem \ref{thm:oper-conv}, (see Lemma 1 in \cite{Descloux-Nassif-Rappaz1978-1}).
\end{proof}

Let $\xi$ be an eigenvalue of $T$ with algebraic multiplicity $n$ and $\Lambda$ be a Jordan curve in $\mathbb{C}$ containing $\xi$, which lies in $\rho(T)$ and does not enclose any other points of $\sigma(T)$. We define the spectral projection $E(\xi)$ from $(L^2(\Omega))^2$ into $(H^1_0(\Omega))^2$ by
$$E(\xi) = \frac{1}{2\pi i}\int_{\Lambda} R_z(T)\, dz. $$
Owing to Theorem \ref{thm:resolBdd}, we can define the discrete spectral projection $E_h(\xi)$ from $(L^2(\Omega))^2$ into $\bH_h(\Omega)$ for $h$ small enough by
$$E_h(\xi) = \frac{1}{2\pi i} \int_{\Lambda} R_z(T_h)\, dz. $$
We simply denote the projections $E(\xi)$ and $E_h(\xi)$ by $E$ and $E_h$, respectively.

\begin{theorem}
The discrete projection operator $E_h$ converges uniformly to the projection operator $E$, i.e., it holds that
\begin{equation}
\lim_{h \to 0}\|E - E_h\|_{\mathscr{L}((L^2(\Omega))^2, \bH_h(\Omega))} = 0. \nonumber
\end{equation}
\end{theorem}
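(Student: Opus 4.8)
The plan is to write the difference of the two spectral projections as a contour integral of the difference of resolvents and then use the second resolvent identity to peel off the factor $T-T_h$, whose smallness is quantified by Theorem \ref{thm:oper-conv}. First I would fix the Jordan curve $\Lambda\subset\rho(T)$ surrounding $\xi$ and take $h$ small enough that, by inequality (\ref{eq:resolBdd2}) of Theorem \ref{thm:resolBdd}, every $z\in\Lambda$ lies in $\rho(T_h)$ as well. Since $\Lambda$ is compact and the constants $C_1,C_2$ of Theorem \ref{thm:resolBdd} depend only on $\Omega$ and $|z|$, this yields a uniform bound $\|R_z(T_h)\bv\|_{a,h}\le (1/C_2)\|\bv\|_{a,h}$ for all $\bv\in\bH_h(\Omega)$ and $z\in\Lambda$, together with the standard bound $\sup_{z\in\Lambda}\|R_z(T)\|_{\mathscr{L}((L^2(\Omega))^2,(L^2(\Omega))^2)}=:M<\infty$, which comes from analyticity of the resolvent of the compact self-adjoint operator $T$ on the compact set $\Lambda$.

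Then for $\bff\in(L^2(\Omega))^2$ I would write
\[
(E-E_h)\bff=\frac{1}{2\pi i}\int_{\Lambda}\bigl(R_z(T)-R_z(T_h)\bigr)\bff\,dz=\frac{1}{2\pi i}\int_{\Lambda}R_z(T_h)\,(T-T_h)\,R_z(T)\bff\,dz,
\]
the second equality being the resolvent identity $R_z(T)-R_z(T_h)=R_z(T_h)(T-T_h)R_z(T)$. The type-checking is the point that must be handled with care: $\bw:=R_z(T)\bff$ lies in $(L^2(\Omega))^2$, so $T\bw\in(H^1_0(\Omega))^2$ and $T_h\bw\in\what\bN_h(\Omega)$, whence $(T-T_h)\bw\in(H^1_0(\Omega))^2+\what\bN_h(\Omega)=\bH_h(\Omega)$ and its $\|\cdot\|_{a,h}$-norm is defined; moreover $R_z(T_h)$ maps $\bH_h(\Omega)$ into itself, so the whole integrand, and hence $(E-E_h)\bff$, belongs to $\bH_h(\Omega)$, as required for measuring it in $\|\cdot\|_{a,h}$.

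The key estimate then chains the three factors in the order dictated by the available norms. Applying the energy-norm resolvent bound for $R_z(T_h)$, then Theorem \ref{thm:oper-conv} in the form $\|(T-T_h)\bw\|_{a,h}\le Ch\|\bw\|_{0,\Omega}$, and finally the $L^2$-bound on $R_z(T)$, I obtain for each $z\in\Lambda$
\[
\|R_z(T_h)(T-T_h)R_z(T)\bff\|_{a,h}\le\frac{1}{C_2}\|(T-T_h)R_z(T)\bff\|_{a,h}\le\frac{Ch}{C_2}\|R_z(T)\bff\|_{0,\Omega}\le\frac{CMh}{C_2}\|\bff\|_{0,\Omega}.
\]
Integrating over $\Lambda$ gives $\|(E-E_h)\bff\|_{a,h}\le (|\Lambda|/2\pi)(CM/C_2)\,h\,\|\bff\|_{0,\Omega}$, that is, $\|E-E_h\|_{\mathscr{L}((L^2(\Omega))^2,\bH_h(\Omega))}\le C'h$, which tends to $0$ and in fact exhibits the rate $O(h)$.

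The main obstacle is not any single estimate but the bookkeeping of the two norms: the target operator norm has its domain measured in $\|\cdot\|_{0,\Omega}$ and its range in $\|\cdot\|_{a,h}$, so the resolvent identity must be arranged so that the continuous resolvent $R_z(T)$, which is bounded from $L^2$ to $L^2$, sits on the input side, while the discrete resolvent $R_z(T_h)$, controlled only in the energy norm through Theorem \ref{thm:resolBdd}, sits on the output side. Theorem \ref{thm:oper-conv} is precisely the bridge that converts an $L^2$-input bound into an $\|\cdot\|_{a,h}$-output bound while supplying the decisive factor $h$. The only remaining care lies in justifying the uniform-in-$z$ resolvent bounds on $\Lambda$ and the inclusion $\Lambda\subset\rho(T_h)$ for $h$ small, both of which follow at once from Theorem \ref{thm:resolBdd}.
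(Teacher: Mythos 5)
Your proposal is correct and follows essentially the same route as the paper: the contour-integral representation of $E-E_h$, the resolvent identity $R_z(T)-R_z(T_h)=R_z(T_h)(T-T_h)R_z(T)$, and the three-factor bound combining Theorem \ref{thm:resolBdd} (discrete resolvent, energy norm), Theorem \ref{thm:oper-conv} (uniform convergence $T_h\to T$), and the boundedness of $R_z(T)$ on $(L^2(\Omega))^2$. Your version is in fact slightly sharper than the paper's, since tracking the factor $h$ from $\|(T-T_h)\bw\|_{a,h}\le Ch\|\bw\|_{0,\Omega}$ yields the explicit rate $\|E-E_h\|_{\mathscr{L}((L^2(\Omega))^2,\bH_h(\Omega))}=O(h)$ rather than mere convergence to zero.
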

\begin{proof}
We remark the residual identity
$$R_z(T) - R_z(T_h) = R_z(T_h)(T-T_h)R_z(T),$$
so that
\begin{align*}
\|E - E_h\|_{\mathscr{L}((L^2(\Omega))^2, \bH_h(\Omega))} \leq & \| R_z(T_h)  \|_{\mathscr{L}(\bH_h(\Omega), \bH_h(\Omega))}\|T-T_h\|_{\mathscr{L}((L^2(\Omega))^2, \bH_h(\Omega))} \\
	&\cdot \|R_z(T)\|_{\mathscr{L}((L^2(\Omega))^2, (L^2(\Omega))^2)}.
\end{align*}
By Theorem \ref{thm:resolBdd} and Fredholm alternative \cite{Kato}, the resolvent operators $R_z(T_h)$ and $R_z(T)$ are bounded for $h$ small enough. In addition, the operator $T_h$ converges to $T$ uniformly by Theorem \ref{thm:oper-conv}. Therefore, we conclude the proof.
\end{proof}

Finally, we can say that the discrete problem (\ref{eq:disc-form}) is a spectrally correct approximation of the problem (\ref{eq:variational_form}), provided that the following theorem holds \cite{Descloux-Nassif-Rappaz1978-1}. For the proofs of following result, we refer to those of Theorem 1,2,3 and 6 from \cite{Descloux-Nassif-Rappaz1978-1}.
\begin{theorem} \label{thm:spect-correct}

$\bullet$ (Non-pollution of the spectrum) Let $A \subset \mathbb{C}$ be an open set containing $\sigma(T)$. Then for sufficiently small $h$, $\sigma(T_h) \subset A$. \\
$\bullet$ (Non-pollution of the eigenspace) $$\lim_{h \to 0} \mathrm{dist}_h(E_h(\bH_h(\Omega)), E((H^1_0(\Omega))^2))=0.$$
$\bullet$ (Completeness of the eigenspace) $$\lim_{h \to 0} \mathrm{dist}_h(E((H^1_0(\Omega))^2), E_h(\bH_h(\Omega)))=0.$$
$\bullet$ (Completeness of the spectrum) For all $z \in \sigma(T)$,
$$\lim_{h \to 0} \mathrm{dist}_h (z, \sigma(T_h)) = 0.$$

\end{theorem}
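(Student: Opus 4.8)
The plan is to derive all four assertions from the abstract spectral-approximation theory of \cite{Descloux-Nassif-Rappaz1978-1} (compare \cite{Babuska-Osborn,Osborn}), feeding in the three facts already assembled above: that $T$ is compact and self-adjoint, so that $\sigma(T)$ consists of isolated eigenvalues of finite algebraic multiplicity accumulating only at $0$ (and, since $(L^2(\Omega))^2$ is infinite dimensional, $0\in\sigma(T)$); the resolvent lower bounds of Theorem \ref{thm:resolBdd}; and the uniform convergence $E_h\to E$ of the spectral projections just proved, which itself rests on the operator convergence of Theorem \ref{thm:oper-conv}. With these in hand each bullet becomes a formal deduction, so the only genuine work is packaging the mesh-dependent estimates correctly.

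First I would treat the two spectral statements. Since the coercivity of $a_h$ makes $\{T_h\}$ uniformly bounded, every $\sigma(T_h)$ lies in a fixed disc $B_R=\{\,z:|z|\le R\,\}$. Given an open set $A\supset\sigma(T)$, the set $B_R\setminus A$ is compact and, because $0\in\sigma(T)\subset A$, it is contained in $\rho(T)$ and bounded away from $0$; a compactness argument then promotes the pointwise bound (\ref{eq:resolBdd2}) to a uniform lower bound for $z-T_h$ on $B_R\setminus A$, so that by the Fredholm alternative \cite{Kato} the operator $z-T_h$ is invertible there for $h$ small. Hence $\sigma(T_h)\subset A$, which is non-pollution of the spectrum. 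For completeness of the spectrum I would fix a nonzero $z_0\in\sigma(T)$ of multiplicity $n\ge 1$, enclose it by a small Jordan curve $\Lambda\subset\rho(T)$ meeting no other point of the spectrum, and use that $\|E-E_h\|\to0$ forces $\mathrm{rank}\,E_h=\mathrm{rank}\,E=n$ for $h$ small. Thus $T_h$ carries exactly $n$ eigenvalues inside $\Lambda$; letting the radius of $\Lambda$ shrink gives $\mathrm{dist}_h(z_0,\sigma(T_h))\to0$.

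The two eigenspace statements I would obtain from a single gap estimate. For any $\bu_h\in E_h(\bH_h(\Omega))$ with $\|\bu_h\|_{a,h}=1$ one has $\bu_h=E_h\bu_h$, whence
\[
\mathrm{dist}_h\bigl(\bu_h,\,E((H^1_0(\Omega))^2)\bigr)\le\|\bu_h-E\bu_h\|_{a,h}=\|(E_h-E)\bu_h\|_{a,h}\le\|E_h-E\|_{\mathscr{L}((L^2(\Omega))^2,\bH_h(\Omega))}\,\|\bu_h\|_{0,\Omega}.
\]
Because coercivity together with Poincaré's inequality give $\|\bu_h\|_{0,\Omega}\le C\|\bu_h\|_{a,h}=C$, the right-hand side tends to $0$ uniformly in $\bu_h$, and taking the supremum yields non-pollution of the eigenspace. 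The symmetric computation for $\bu\in E((H^1_0(\Omega))^2)$ with $\|\bu\|_{a,h}=1$, using that $E((H^1_0(\Omega))^2)$ is finite dimensional so that $\|\bu\|_{0,\Omega}$ stays bounded, gives completeness of the eigenspace in exactly the same way.

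The main obstacle is conceptual rather than computational: $T$ and $T_h$ take values in different spaces, namely $(H^1_0(\Omega))^2$ and the nonconforming $\what\bN_h(\Omega)$, and the only norm in which both are compared, $\|\cdot\|_{a,h}$, is mesh-dependent and broken, so that nonconformity injects a consistency error. Verifying that the convergence is uniform in precisely the operator-norm sense the abstract theorems of \cite{Descloux-Nassif-Rappaz1978-1,Kato} demand---in particular reconciling the three distinct operator topologies $\mathscr{L}((L^2(\Omega))^2,\bH_h(\Omega))$, $\mathscr{L}(\bH_h(\Omega),\bH_h(\Omega))$ and $\mathscr{L}((L^2(\Omega))^2,(L^2(\Omega))^2)$ that appear in the bound for $\|E-E_h\|$---is the crux of the matter. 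This has, however, already been secured by Theorem \ref{thm:oper-conv} together with the resolvent estimate of Theorem \ref{thm:resolBdd}; once they are granted, the four assertions are formal, and the remaining care lies only in the compactness and rank-stability arguments over $\Lambda$ and $B_R\setminus A$.
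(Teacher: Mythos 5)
Your proposal is correct and follows essentially the same route as the paper, which gives no self-contained proof but refers to Theorems 1, 2, 3 and 6 of Descloux--Nassif--Rappaz \cite{Descloux-Nassif-Rappaz1978-1}: your compactness/uniform-resolvent argument for non-pollution, the rank-stability of the projections for completeness of the spectrum, and the gap estimates via $\|E-E_h\|_{\mathscr{L}((L^2(\Omega))^2,\bH_h(\Omega))}\to 0$ are precisely the contents of those cited theorems, fed with the paper's Theorem \ref{thm:resolBdd}, Theorem \ref{thm:oper-conv} and the uniform projection convergence. Your reconstruction is a faithful (and slightly more explicit) unpacking of that framework, so no substantive difference or gap to report.
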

It remains to show the convergence analysis of eigenvalues. The convergence rate of eigenvalues is obtained by the spectral properties of compact operators and the uniform convergence of the operator $T_h$ to $T$ in Theorem \ref{thm:oper-conv}.
\begin{theorem} \label{thm:eig-conv}
Let $\xi$ be an eigenvalue of $T$ with multiplicity $n$. Then for $h$ small enough there exist $n$ eigenvalues $\{\xi_{1,h}, ... , \xi_{n,h}\}$ of $T_h$ which converge to $\xi$ as follows
\begin{equation*}
\sup_{1\leq i \leq n}|\xi - \xi_{i,h}| \leq C h^2,
\end{equation*}
where a positive constant $C$ is independent of $\xi$ and $h$.
\end{theorem}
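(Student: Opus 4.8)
The plan is to place the argument inside the spectral-approximation framework of \cite{Babuska-Osborn,Osborn,Descloux-Nassif-Rappaz1978-1}, using crucially that both $T$ and $T_h$ are compact and self-adjoint, so that every eigenvalue is real and the ascent of $\xi$ equals one. First I would fix an $(L^2(\Omega))^2$-orthonormal basis $\{\bu_1,\dots,\bu_n\}$ of the eigenspace $M := E((H^1_0(\Omega))^2)$ attached to $\xi$; each $\bu_i$ solves $T\bu_i = \xi\bu_i$ and, by hypothesis (H1), is as regular as $(\wtilde H^2(\Omega))^2$. The uniform convergence $\|E-E_h\|\to 0$ established just above, combined with the non-pollution and completeness statements of Theorem \ref{thm:spect-correct}, shows that for $h$ small the projection $E_h$ has rank exactly $n$; hence $T_h$ possesses precisely $n$ eigenvalues $\xi_{1,h},\dots,\xi_{n,h}$ (with multiplicity) inside the curve $\Lambda$, each tending to $\xi$. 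This settles the qualitative claim, and it remains only to pin down the rate.

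For the rate I would apply the self-adjoint eigenvalue estimate of Osborn, which in the present notation takes the form
\begin{equation*}
\sup_{1\le i\le n}|\xi - \xi_{i,h}|
\;\le\; C\Bigl(\max_{1\le i,j\le n}\bigl|\bigl((T-T_h)\bu_i,\bu_j\bigr)\bigr|
\;+\; \sup_{\substack{\bu\in M\\ \|\bu\|_{0,\Omega}=1}}\|(T-T_h)\bu\|_{a,h}^{\,2}\Bigr),
\end{equation*}
the quadratic dependence on the second (energy) term being exactly the gain that self-adjointness provides. Both pieces are then controlled by Theorem \ref{thm:oper-conv}. For the bilinear terms, Cauchy--Schwarz in $(L^2(\Omega))^2$ together with $\|\bu_j\|_{0,\Omega}=1$ gives $|((T-T_h)\bu_i,\bu_j)|\le \|(T-T_h)\bu_i\|_{0,\Omega}\le C h^2$, using the $L^2$ half of Theorem \ref{thm:oper-conv}. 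For the second term, its energy half yields $\|(T-T_h)\bu\|_{a,h}\le C h\|\bu\|_{0,\Omega}$, so the square contributes $O(h^2)$ as well. Summing and using finiteness of $n$ produces $\sup_i|\xi-\xi_{i,h}|\le C h^2$; since the constant in Theorem \ref{thm:oper-conv} is independent of the right-hand side, it is in particular independent of the chosen eigenpair, giving the stated independence of $C$ from $\xi$.

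I expect the principal difficulty to be bookkeeping rather than a single hard estimate. The operators live on different spaces --- $T$ maps into $(H^1_0(\Omega))^2$ while $T_h$ maps into the nonconforming space $\what\bN_h(\Omega)$ --- so $T-T_h$ and its restriction to $M$ must be interpreted inside the sum space $\bH_h(\Omega)$ equipped with $\|\cdot\|_{a,h}$, and the self-adjointness that licenses the quadratic term in Osborn's formula has to be checked against the correct $(L^2(\Omega))^2$ inner product in this nonconforming setting. The genuinely delicate point is that the full $O(h^2)$ rate, rather than the $O(h)$ a naive energy-norm perturbation would suggest, rests squarely on the $L^2$ superconvergence half of Theorem \ref{thm:oper-conv}; one must be sure that the consistency error produced by the nonconformity of $\what\bN_h(\Omega)$ has already been absorbed into that $h^2$ bound so that it does not reappear here at order $h$.
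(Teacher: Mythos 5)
Your proposal is correct in substance, but it takes a genuinely different route from the paper. You reach the rate via Osborn's self-adjoint eigenvalue estimate \cite{Osborn}, bounding $\sup_i|\xi-\xi_{i,h}|$ by the double form term $\max_{i,j}|((T-T_h)\bu_i,\bu_j)|$ plus a squared residual over the eigenspace, and then feed in both halves of Theorem \ref{thm:oper-conv}: the $L^2$ bound ($O(h^2)$) for the form term and the energy bound ($O(h)$, squared) for the residual. The paper never invokes the quadratic self-adjoint formula. Instead it restricts $E_h$ and $T$ to $E((L^2(\Omega))^2)$, uses boundedness of $\Phi_h^{-1}$ (from \cite{Babuska-Osborn,Osborn}) to form the similar operator $\wtilde T_h = \Phi_h^{-1}T_h\Phi_h$ and $S_h=\Phi_h^{-1}E_h$, derives the identity $(\wtilde T-\wtilde T_h)\bff = S_h(T-T_h)\bff$ on $E((L^2(\Omega))^2)$, and then bounds $\sup_i|\xi-\xi_{i,h}|\leq C\|\wtilde T-\wtilde T_h\|_{\mathscr{L}(E((L^2(\Omega))^2),E((L^2(\Omega))^2))}$ directly by the $L^2$ half of Theorem \ref{thm:oper-conv} alone (ascent one means $\wtilde T=\xi I$ on the eigenspace, so eigenvalue differences are controlled by the operator-norm difference). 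This leaner argument entirely sidesteps the delicate point you correctly flag, namely whether the squared term in Osborn's formula may be measured in $\|\cdot\|_{a,h}$ and whether self-adjointness survives the nonconforming setting; for the record, $T_h$ is $(L^2(\Omega))^2$-self-adjoint because $a_h$ is symmetric, and with $X=(L^2(\Omega))^2$ in Osborn's theorem your squared term is actually $O(h^4)$, so your energy-norm version is a harmless overestimate provided a discrete Poincar\'{e} inequality relates $\|\cdot\|_{0,\Omega}$ to $\|\cdot\|_{a,h}$ on $\bH_h(\Omega)$. Two small remarks: your appeal to (H1) for eigenfunction regularity is unnecessary, since Theorem \ref{thm:oper-conv} holds for all $\bff\in(L^2(\Omega))^2$; and your closing observation is exactly right that both routes hinge on the $L^2$ superconvergence --- in the paper's proof this is even more visible, since the $L^2$ estimate is the \emph{only} ingredient beyond boundedness of $S_h$, whereas your proof additionally exhibits where self-adjointness buys the quadratic gain.
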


\begin{proof}
The existence of $\xi_{i,h}$ is a direct consequence of Theorem \ref{thm:spect-correct}. To estimate the convergence rate of $\xi_{i,h}$, we introduce some auxiliary operators. Let $\Phi_h$ and $\wtilde T$ be the restriction of operators $E_h$ and $T$ to $E((L^2(\Omega))^2)$, respectively. Following the arguments in \cite{Babuska-Osborn, Osborn}, we have that the inverse $\Phi_h^{-1} : E_h(\bH_h(\Omega)) \to E((L^2(\Omega))^2)$ is bounded for $h$ small enough. Hence we can define $\wtilde T_h := \Phi^{-1}_h T_h \Phi_h$ and $S_h := \Phi_h^{-1}E_h$. Note that the operator $S_h$ is bounded and $S_h \bff = \bff$ for any $\bff \in E((L^2(\Omega))^2)$. The auxiliary operators $\wtilde T$, $\wtilde T_h$, $S_h$ and $\Phi_h$ provide a following property, for any $\bff \in E((L^2(\Omega))^2)$,
\begin{align}
(\wtilde{T}-\wtilde{T}_h)\bff &= T\bff - \Phi_h^{-1}T_h \Phi_h \bff \nonumber \\
	&= S_h T \bff - \Phi_h^{-1} T_h E_h \bff \nonumber \\
	&= S_h T \bff - \Phi_h^{-1}E_hT_h \bff \nonumber  \\ 
	&= S_h(T - T_h)\bff. \nonumber
\end{align}
In view of definition of operator norm (\ref{eq:oper-norm}) and Theorem \ref{thm:oper-conv}, we have
\begin{align*}
\sup_{1\leq i \leq n} |\xi - \xi_{i,h}| &\leq C \| \wtilde T - \wtilde T_h\|_{\mathscr{L}(E((L^2(\Omega))^2), E((L^2(\Omega))^2))} \\
	&= C \sup_{\bff \in E((L^2(\Omega))^2)} \frac{\|(\wtilde T - \wtilde T_h)\bff\|_{0,\Omega}}{\|\bff\|_{0,\Omega}} \\
	&= C \sup_{\bff \in E((L^2(\Omega))^2)} \frac{\|S_h(T - T_h) \bff\|_{0,\Omega}}{\|\bff \|_{0,\Omega}} \\
	&\leq C \sup_{\bff \in E((L^2(\Omega))^2)} \frac{\|(T - T_h) \bff\|_{0, \Omega}}{\|\bff \|_{0,\Omega}} \\
	&\leq C h^2\;.
\end{align*}

\end{proof}
\begin{remark}
Overall, we show that our IFEM is spurious-free and has optimal convergence property by Theorem \ref{thm:spect-correct} and Theorem \ref{thm:eig-conv}. Although uniform convergence of solution operator which is essential basis for spectral analysis is based on the hypothesis (H1) and (H2), a variety of numerical results reported in the next section corroborate our theoretical results.
\end{remark}

\section{Numerical results}
In this section we present a series of numerical experiments to verify the theoretical analysis for the approximation of the eigenvalue problem (\ref{eq:Model}) in the previous sections. We recall the definition of the Lam\'{e} coefficients of a material
$$\lambda = \frac{E \nu}{(1+\nu)(1-2\nu)},\,\,\; \mu = \frac{E}{2(1+\nu)},$$
where $E$ is the Young's modulus and $\nu$ is the Poisson ratio. We carry out numerical tests for the cases of the compressible elastic materials ($\nu < 0.5$) and the nearly incompressible elastic materials ($\nu \approx 0.5$) with various shapes of interface in Figure \ref{fig:interface}. For a square domain $\Omega = [-1,1]^2$, we use uniform triangle meshes with mesh size $h = 2/N$ where the refinement parameter $N$ is the number of elements on each edge. Since analytical expressions for the eigenvalues are not available for all of the examples, we use the numerical results on a sufficiently refined mesh as the reference eigenvalues in order to estimate the order of convergence. In all the numerical examples, the IFEM is implemented in a C++ code and the eigenvalues are computed with ARPACK \cite{Lehoucq-Sorensen-Yang}.

\begin{figure}[hbt]
\begin{center}
\begin{pspicture}(-0,-2.2)(7,7.0)\footnotesize
\psset{xunit=1.65, yunit=1.65}
\rput(0.5,3){
\pspolygon(-1,-1)(1,-1)(1,1)(-1,1)
\rput(-0.62,0.6){$\Omega^+$} \rput(-0.12,0.12){$\Omega^-$}
\psplot{-0.6}{0.6}{ 0.36 x 2 exp sub sqrt }
\psplot{-0.6}{0.6}{ 0.36 x 2 exp sub sqrt -1. mul}
\rput(0.1,-1.4){Circular interface}
}

\rput(4.0,3){
\pspolygon(1,-1)(1,1)(-1,1)(-1,-1)(1,-1)

\rput(-0.62,0.4){$\Omega^+$} \rput(-0.12,0.12){$\Omega^-$}
\psplot{-0.6}{0.6}{ 1 x .6 div 2 exp sub sqrt .3 mul}
\psplot{-0.6}{0.6}{ 1 x .6 div 2 exp sub sqrt -.3 mul}
\rput(0.1,-1.4){Elliptical interface}
 }

\rput(0.5,0){
\pspolygon(-1,-1)(1,-1)(1,1)(-1,1)
\psplot{-1}{1}{x 0.5 mul 0.2 sub}
 \rput(0.3,-0.5){$\Omega^-$}  \rput(-0.1,0.7){$\Omega^+$}
\rput(0.,-1.4){Straight-line interface}
}

\rput(4.0,0){
\pspolygon(-1,-1)(1,-1)(1,1)(-1,1)
\pscircle(0,0){0.429} 
\pscircle(0.5,0.5){0.3135} 
\pscircle(-0.5,0.5){0.3135} 
\pscircle(-0.5,-0.5){0.3135} 
\pscircle(0.5,-0.5){0.3135} 
 \rput(0.5,-0.5){$\Omega^-$}  \rput(0.5,0.5){$\Omega^-$}
 \rput(-0.5,-0.5){$\Omega^-$} \rput(-0.5,0.5){$\Omega^-$}
 \rput(0.,0.61){$\Omega^+$} \rput(0.,0.05){$\Omega^-$}
\rput(0.,-1.4){Multiple interfaces}
}
\end{pspicture}
\caption{Domain and interfaces in Examples 1,2,3,4 and 5}\label{fig:interface}
\end{center}
\end{figure}
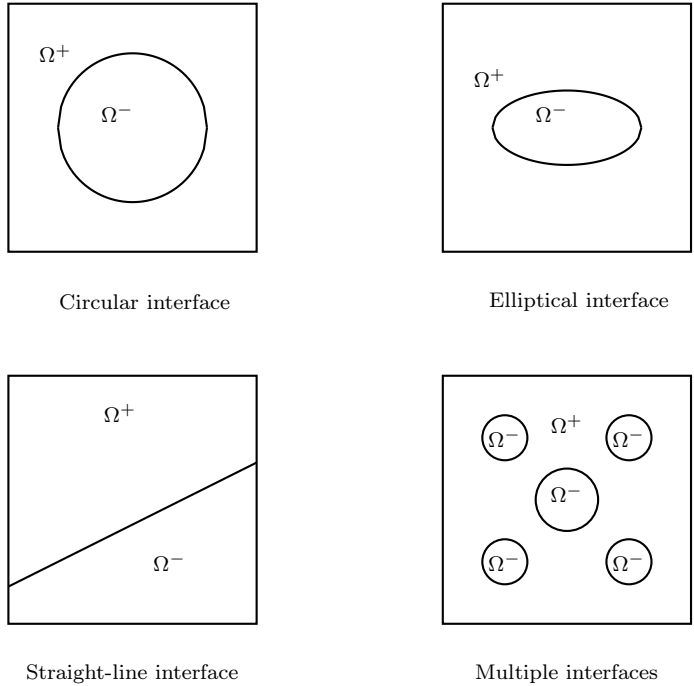

\textbf{Example 1} ({\it Circular interface}). In this example, we consider the eigenvalue problem (\ref{eq:Model}) with a circular interface. The interface $\Gamma$ is a circle with radius $r=0.6$ dividing $\Omega= [-1,1]^2$ into subdomains $\Omega^+$ and $\Omega^-$ as follows,
\begin{equation}
\Omega^+ := \{(x,y) : x^2 + y^2 > r^2\}, \quad \Omega^- := \{(x,y) : x^2 + y^2 < r^2 \}.
\end{equation}
We set the Lam\'{e} coefficients as $(\mu^-,\mu^+) = (0.5,5),(5,0.5),\, \lambda^{\pm} = 5\mu^{\pm}$ and Poisson ratio $\nu^{\pm} \approx 0.417$.
Table \ref{table:comp-circle} shows the first six eigenvalues and their rates of convergence. The first columns are the reference eigenvalues computed with very fine mesh size $h = 2^{-9}$ and the other columns are the eigenvalues obtained with IFEM for varying $h$. We observe that the convergence rates of the eigenvalue errors are quadratic. An eigenfunction for eigenvalue $\omega_4^2$, together with each $x$ and $y$-component of eigenfunction, are depicted in Figure \ref{fig:circle-eigvec}.

\renewcommand{\arraystretch}{1.4}
\begin{table}[!ht] \footnotesize
\caption{First six eigenvalues computed by IFEM with circular interface for compressible materials. The reference eigenvalues $\omega^2_{ref}$ in the first column are computed with $h = 1/2^{9}$. The numbers in parentheses show convergence rates.}
\centering
\begin{tabular}{c|crrrr}
\hline
\multicolumn{6}{l}{Circular interface - $(\mu^-,\mu^+) = (0.5,5),\, \lambda^{\pm} = 5\mu^{\pm}$} \\ \hline
$\omega^2_{ref}$	  & \multicolumn{1}{c}{$h=1/2^{3}$} & \multicolumn{1}{c}{$h=1/2^{4}$(ord)}  & \multicolumn{1}{c}{$h=1/2^{5}$(ord)}  & \multicolumn{1}{c}{$h=1/2^{6}$(ord)}  & \multicolumn{1}{c}{$h=1/2^{7}$(ord)} \\ \hline
18.824 &	20.409		&	19.197	(2.09) &	18.915	(2.03) &	18.846	(2.02) &	18.829	(2.07) 	 \\
23.384 &	24.040		&	23.545	(2.03) &	23.420	(2.16) &	23.392	(2.19) &	23.385	(2.51) 	 \\
23.385 &	24.953		&	23.832 (1.81) &	23.499	(1.97) &	23.413	(2.02) &	23.391	(2.12) 	 \\
40.666 &   43.609		&	41.349	(2.10) &	40.831	(2.05) &	40.706	(2.03) &	40.675	(2.09) 	 \\
40.667 &   49.381		&	42.751	(2.06) &	41.182	(2.01) &	40.795 (2.01) &	40.698	(2.07) 	 \\
45.938 &   51.811		&	47.431	(1.98) &	46.291	(2.08) &	46.023	(2.05) &	45.957 (2.14) 	\\

\end{tabular}

\begin{tabular}{c|crrrr}
\hline
\multicolumn{6}{l}{Circular interface - $(\mu^-, \mu^+) = (5,0.5),\, \lambda^{\pm} = 5\mu^{\pm}$} \\ \hline
$\omega^2_{ref}$	  & \multicolumn{1}{c}{$h=1/2^{3}$} & \multicolumn{1}{c}{$h=1/2^{4}$(ord)}  & \multicolumn{1}{c}{$h=1/2^{5}$(ord)}  & \multicolumn{1}{c}{$h=1/2^{6}$(ord)}  & \multicolumn{1}{c}{$h=1/2^{7}$(ord)} \\ \hline
7.151 &		7.356 	&	7.205	(1.93) &	7.1652 (2.00) &	7.155	(2.00) &	7.1524	(2.06) 	 \\
10.121 &	10.165		&	10.135	(1.63) &	10.124	(2.21) &	10.121	(1.99) &	10.121 	(2.20) 	 \\
10.121 &	10.313		&	10.177 (1.76) &	10.135	(1.98) &	10.124	(1.98) &	10.121	(2.07) 	 \\
24.205 &   25.696		&	24.585	(1.97) &	24.292	(2.12) &	24.224	(2.15) &	24.208	(2.35) 	 \\
24.205 &   26.708		&	24.863	(1.93) &	24.368	(2.01) &	24.245 (2.03) &	24.214	(2.12) 	 \\
32.257 &   34.569		&	32.937	(1.77) &	32.439	(1.90) &	32.303	(1.97) &	32.268 (2.06) 	\\ \hline
\end{tabular}
\label{table:comp-circle}
\end{table}

\begin{figure}[!ht]
	\centering
	 \includegraphics[width=0.44\textwidth, height=0.44\textwidth]{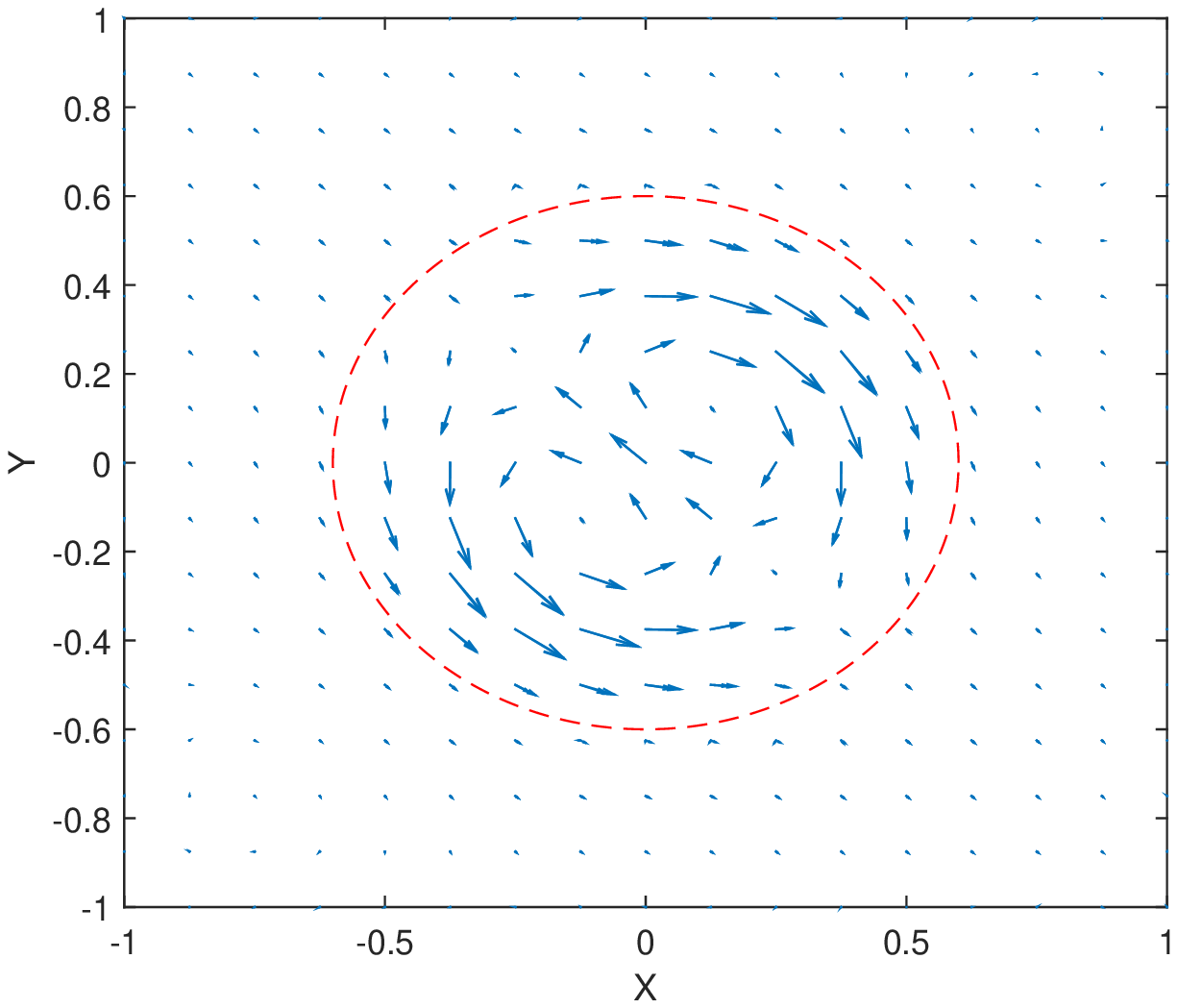} \\
	 \includegraphics[width=0.47\textwidth, height=0.43\textwidth]{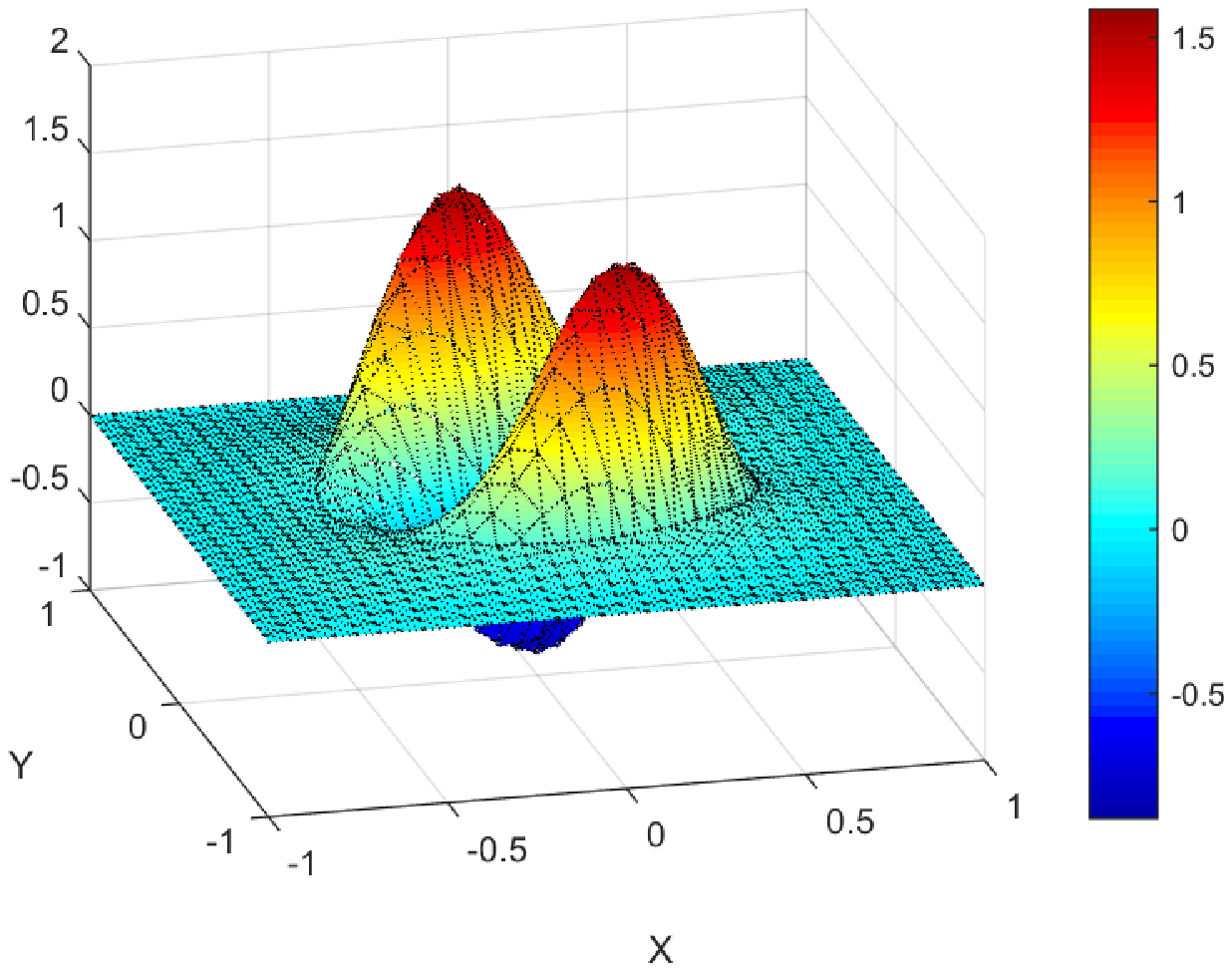}
	\includegraphics[width=0.47\textwidth, height=0.43\textwidth]{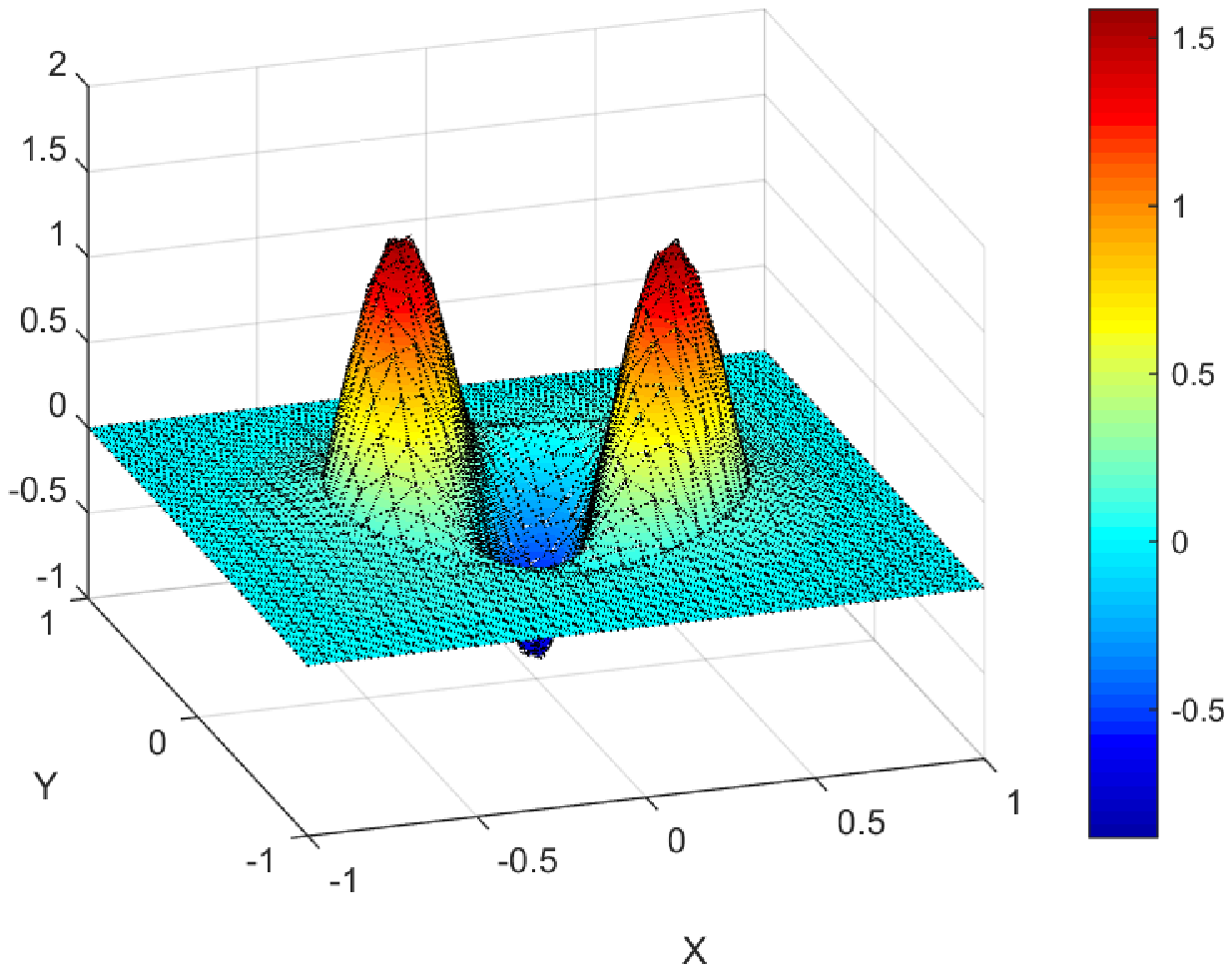}
	\caption{The figure above is an eigenfunction of $\omega^2_4$ when $(\mu^-, \mu^+)=(0.5,5),\,\lambda^{\pm} = 5\mu^{\pm}$ in Example 1. The figures below are x-component and y-component of eigenfunction of $\omega^2_4$.}
	\label{fig:circle-eigvec}
\end{figure}

\textbf{Example 2} ({\it Elliptical interface}). The second example concerns an elliptical interface given by $\Gamma = \{(x,y): x^2/a^2 + y^2/b^2 = 1\}$ where $a=0.6$ and $b=0.3$. We set subdomain $\Omega^-$ to be an interior and $\Omega^+$ to be the other part of the domain $\Omega$, i.e.,
\begin{equation}
\Omega^+ := \{(x,y) : x^2/a^2 + y^2/b^2 > 1\}, \quad \Omega^- := \{(x,y) : x^2/a^2 + y^2/b^2 < 1 \}.
\end{equation}
Let Lam\'{e} coefficients be $(\mu^-, \mu^+) = (0.5,5),(5,0.5)$ and $\lambda^{\pm} = 5\mu^{\pm}$. In Figure \ref{fig:ellipse-comp}, we show the errors of the first four eigenvalues by IFEM and corresponding order of convergence. The reference solution is the numerical results on a refined mesh with mesh size $h = 2^{-9}$. Even though the interface becomes shaper than the circular interface, the optimal convergence for eigenvalues is obtained. We display an eigenfunction of $\omega^2_3$ in Figure \ref{fig:ellipse-eigvec} which is analogous to Figure \ref{fig:circle-eigvec}.

\begin{figure}[!ht]
	\centering
	 \includegraphics[width=0.45\textwidth, height=0.45\textwidth]{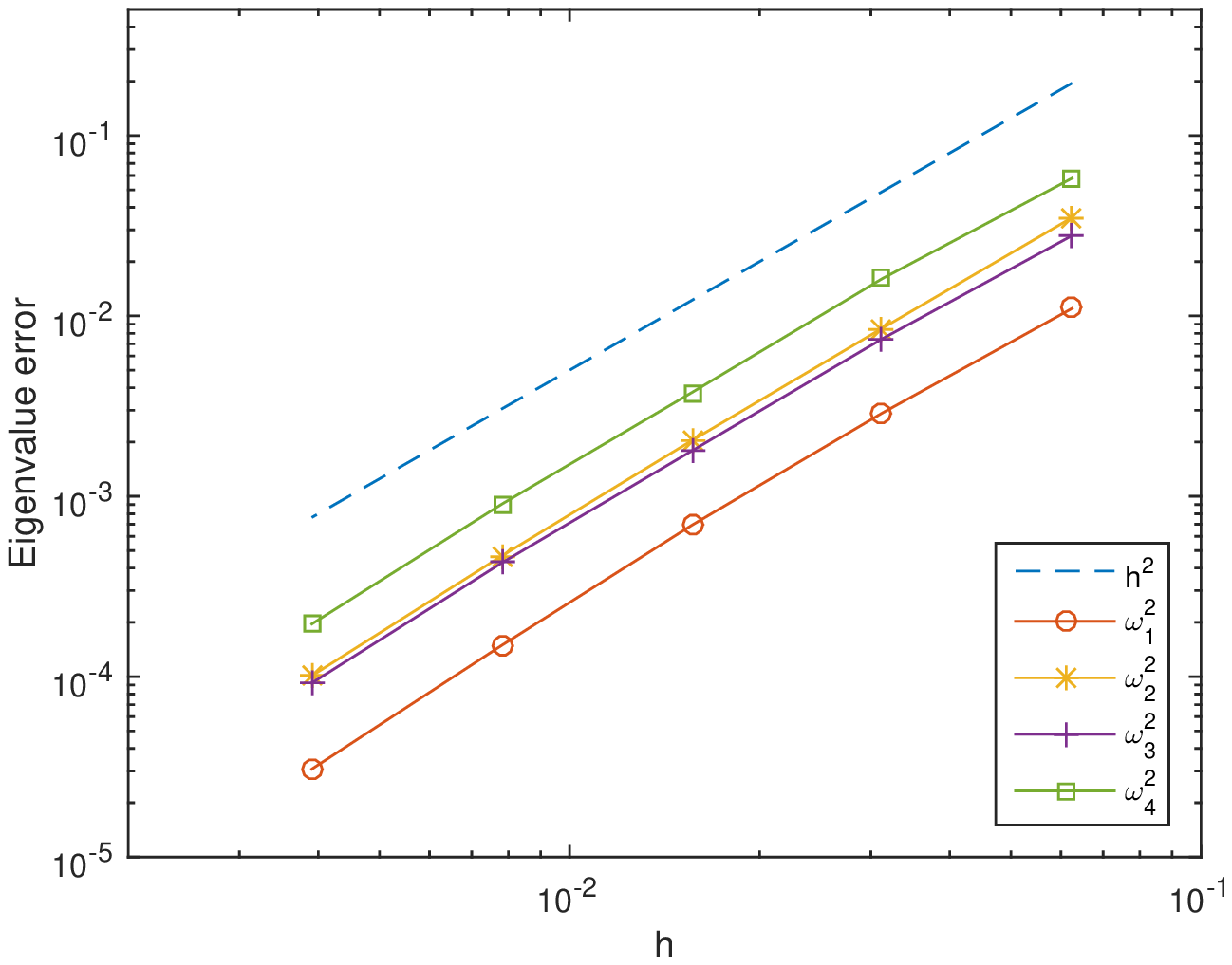}
	\includegraphics[width=0.45\textwidth, height=0.45\textwidth]{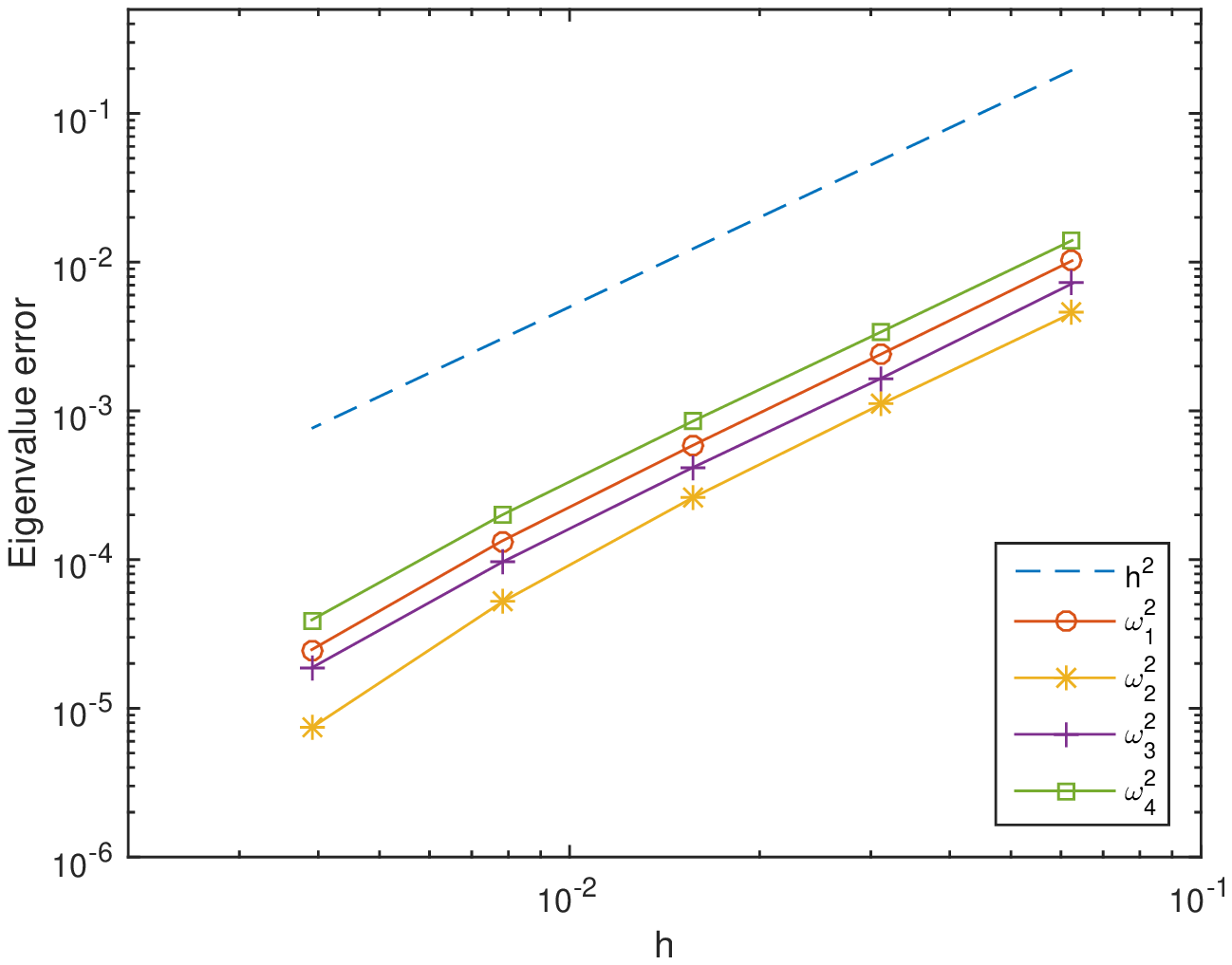}
	\caption{The log-log plots of $h$ versus the relative error of the first four eigenvalues with an elliptical interface for the case of $(\mu^-, \mu^+) = (0.5,5)$ (left) and $(\mu^-, \mu^+) = (5,0.5)$ (right) in Example 2. The broken line represents the optimal convergence rate.}
	\label{fig:ellipse-comp}
\end{figure}

\begin{figure}[!ht]
	\centering
	 \includegraphics[width=0.44\textwidth, height=0.44\textwidth]{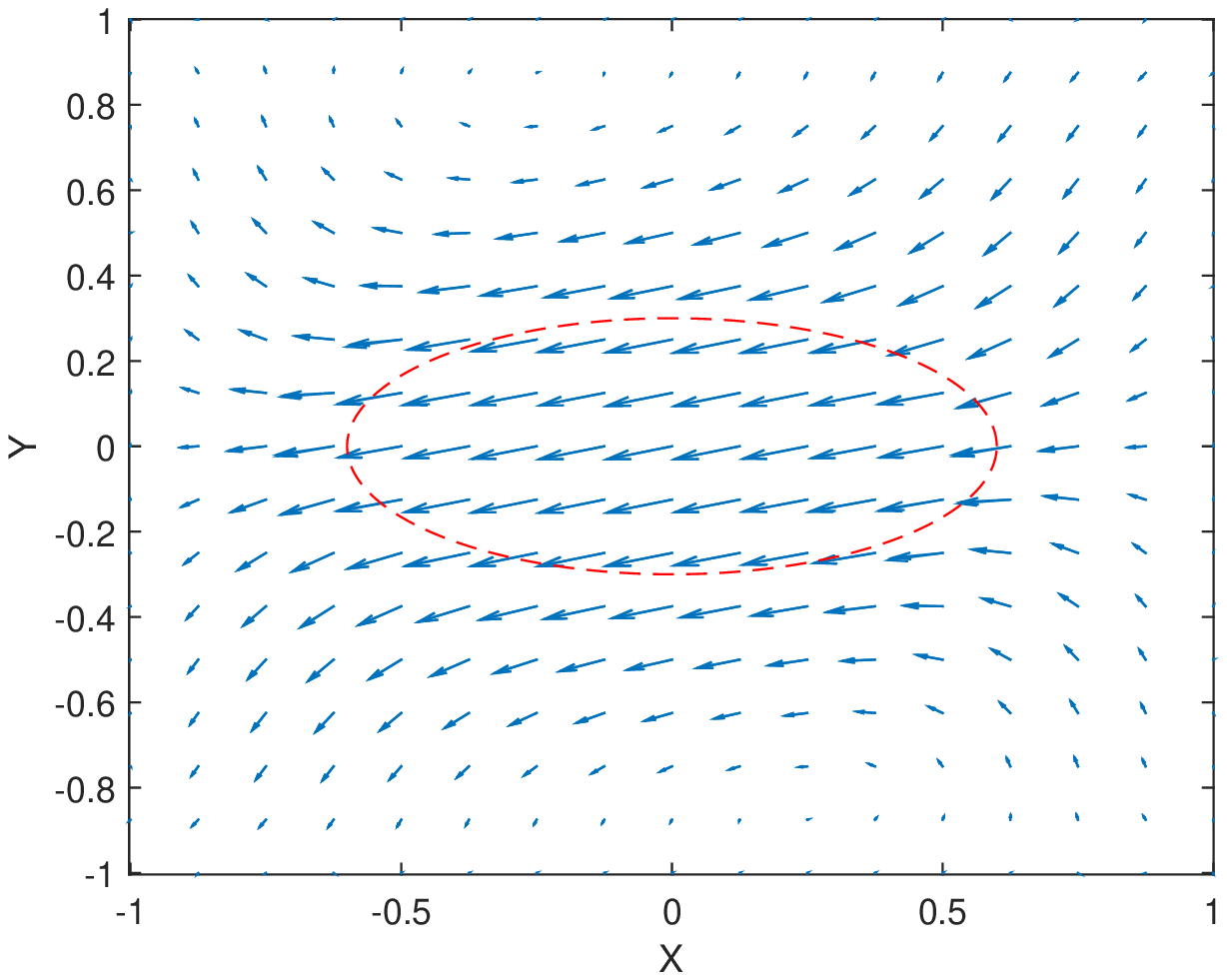}\\
	 \includegraphics[width=0.47\textwidth, height=0.43\textwidth]{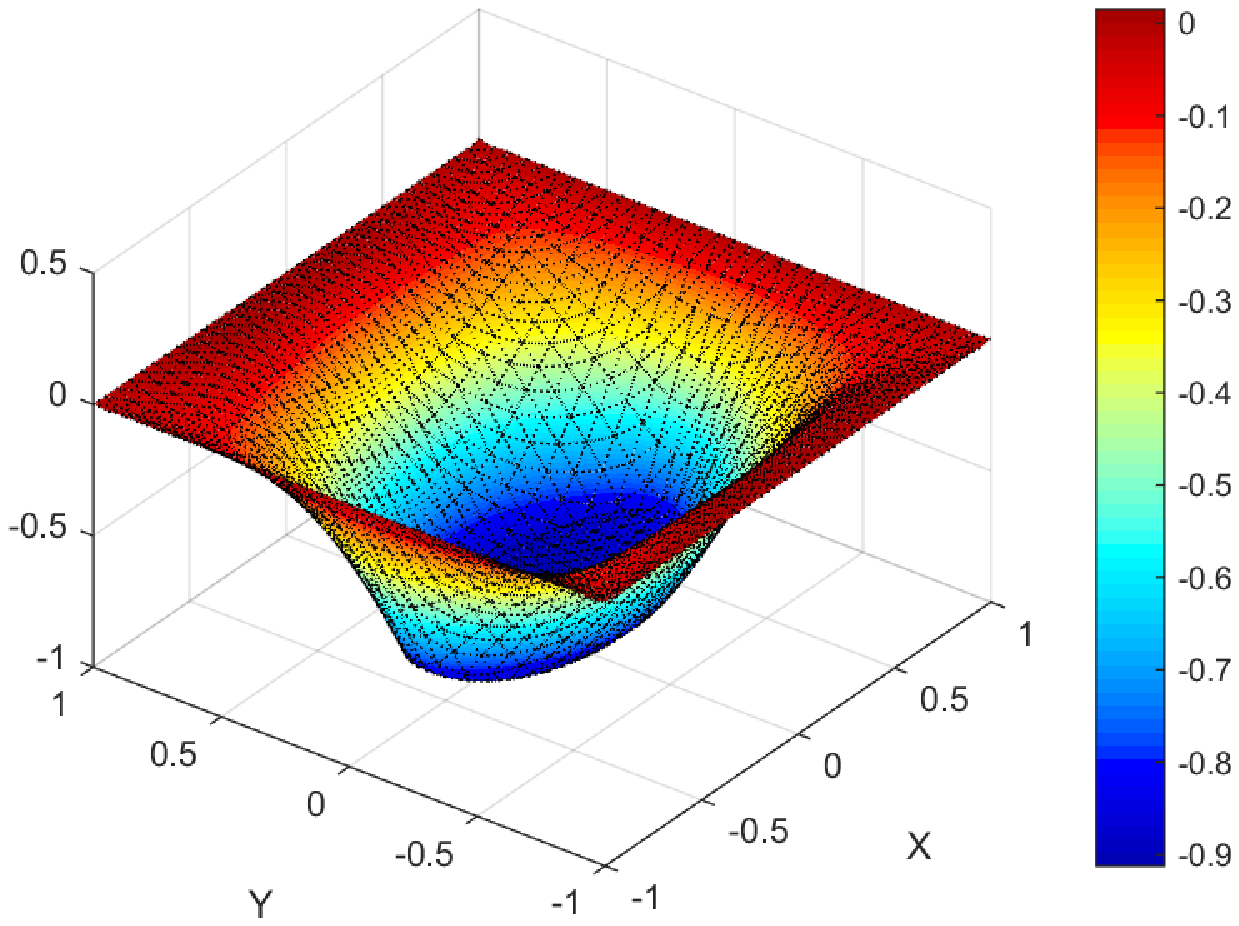}
	\includegraphics[width=0.47\textwidth, height=0.43\textwidth]{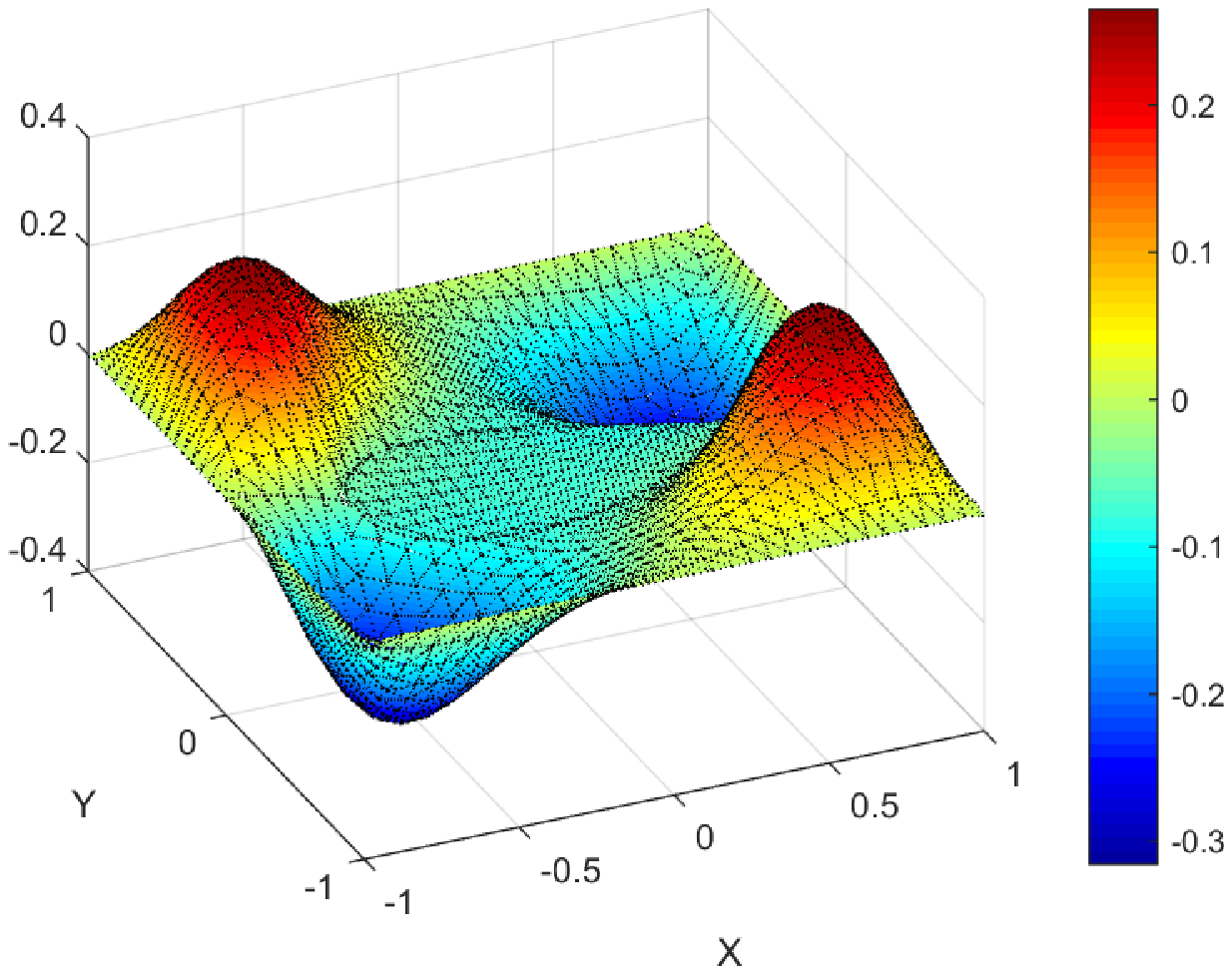}
	\caption{An eigenfunction of $\omega^2_3$ for the case of an elliptical interface when $\mu^- = 5,\, \mu^+=0.5,\lambda^{\pm} = 5\mu^{\pm}$ in Example 2. The figures below are x-component of eigenfunction on the  left and y-component of eigenfunction on the right.}
	\label{fig:ellipse-eigvec}
\end{figure}

\textbf{Example3} ({\it Straight-line interface}). We let an interface be a straight line as $\Gamma = \{(x,y) : y = 0.5x - 0.2\}$ and the subdomians $\Omega^+$ and $\Omega^-$ be
\begin{equation}
\Omega^+ := \{(x,y) : y > 0.5x - 0.2\}, \quad \Omega^- := \{(x,y) : y < 0.5x -0.2\}.
\end{equation}
Lam\'{e} coefficients are the same as previous examples, $(\mu^-,\mu^+) = (0.5,5),\,(5,0.5)$ and $\lambda^{\pm} = 5\mu^{\pm}$. In Figure \ref{fig:line-comp}, we show the errors of the first four eigenvalues computed with IFEM. This figure also presents that the rates of convergence are quadratic. Note that in this example the interface meets the boundary of the domain. Nevertheless, the order of convergence of the eigenvalue is optimal. Figure \ref{fig:line-eigvec} shows the computed eigenfunction corresponding to $\omega_4^2$.

\begin{figure}[!ht]
	\centering
	 \includegraphics[width=0.45\textwidth, height=0.45\textwidth]{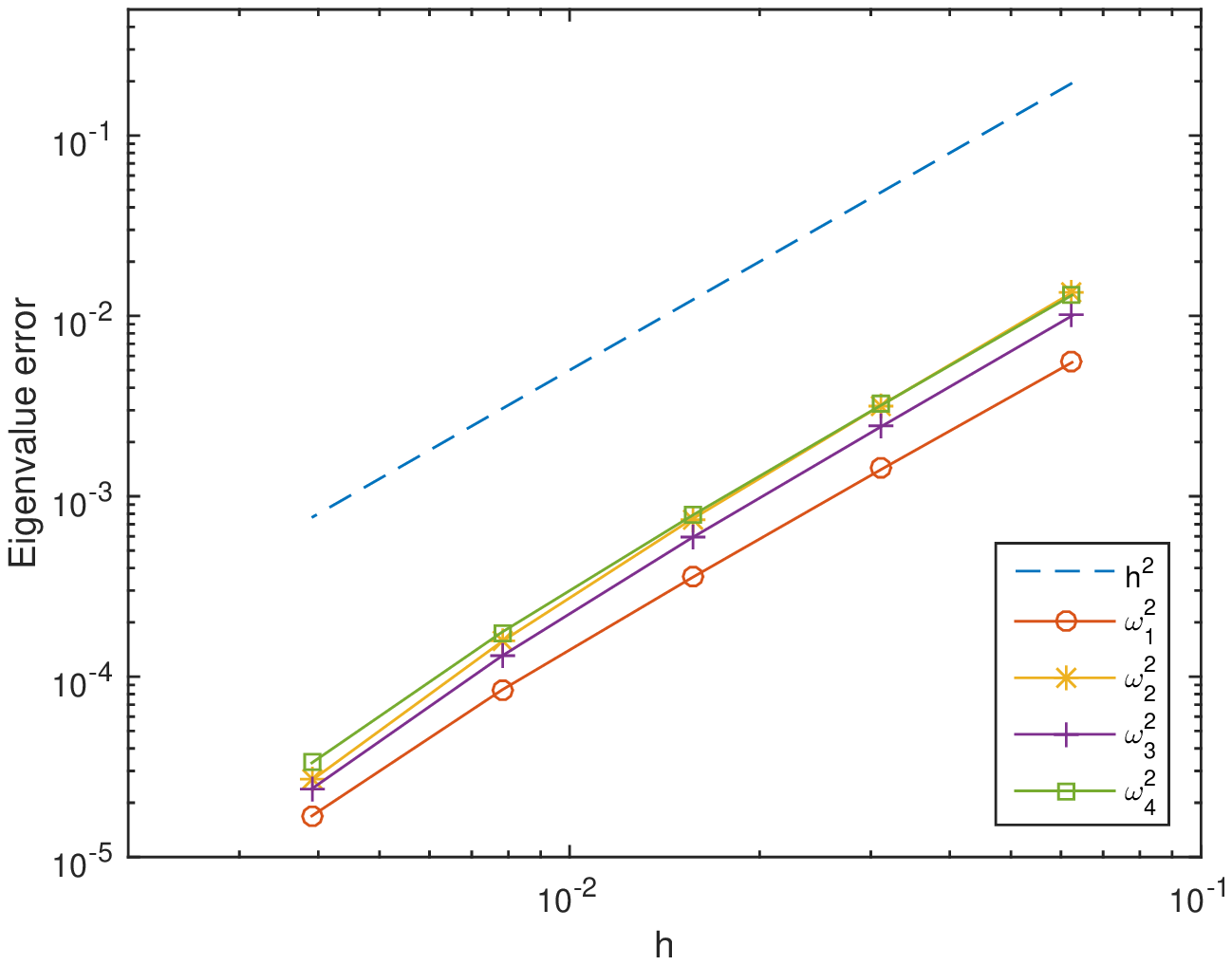}
	\includegraphics[width=0.45\textwidth, height=0.45\textwidth]{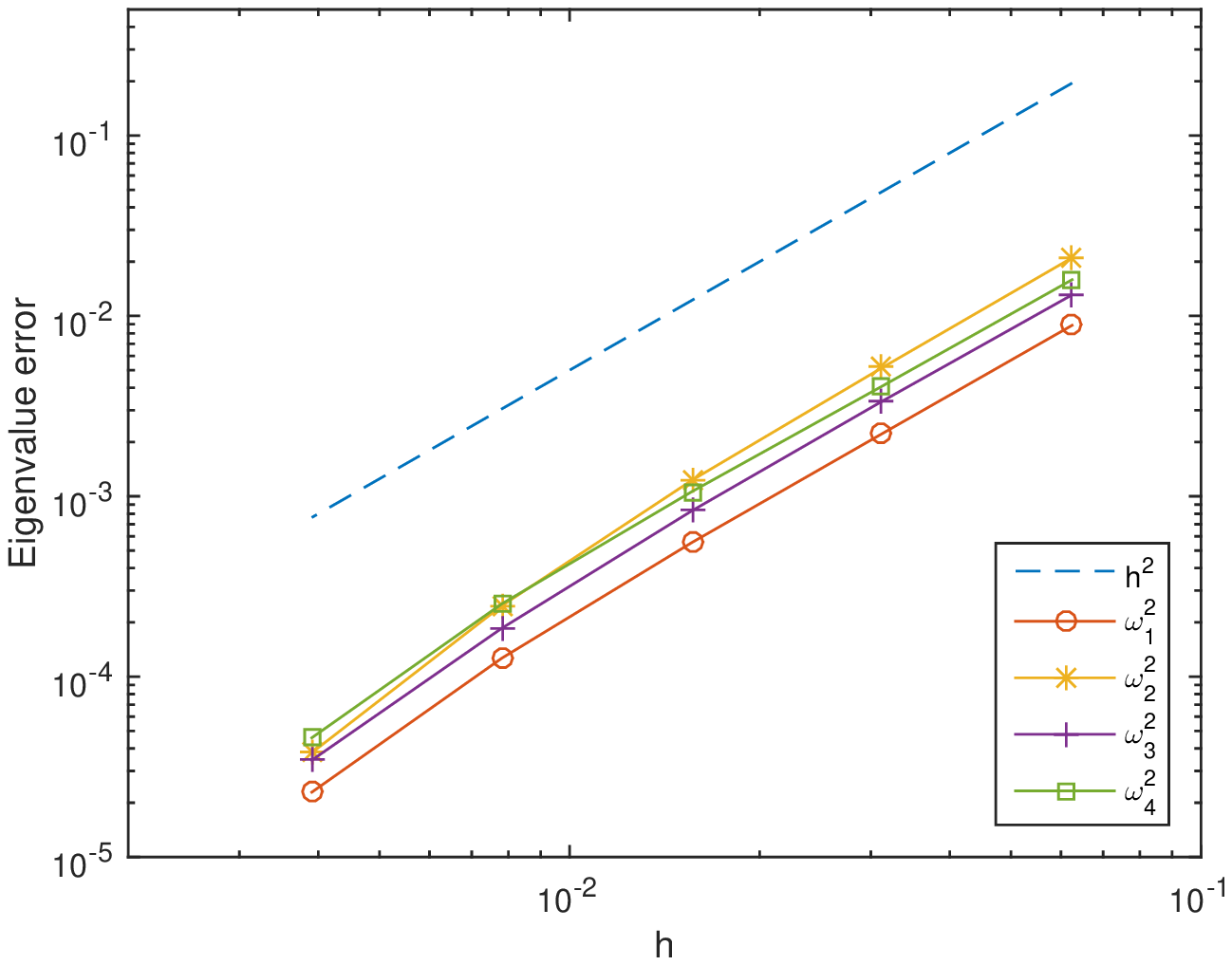}
	\caption{The log-log plots of $h$ versus the relative error of the first four eigenvalues with a straight-line interface for the case of $(\mu^-, \mu^+) = (0.5,5)$ (left) and $(\mu^-, \mu^+) = (5,0.5)$ (right) in Example 3. The broken line represents the optimal convergence rate.}
	\label{fig:line-comp}
\end{figure}

\begin{figure}[!ht]
	\centering
	 \includegraphics[width=0.44\textwidth, height=0.44\textwidth]{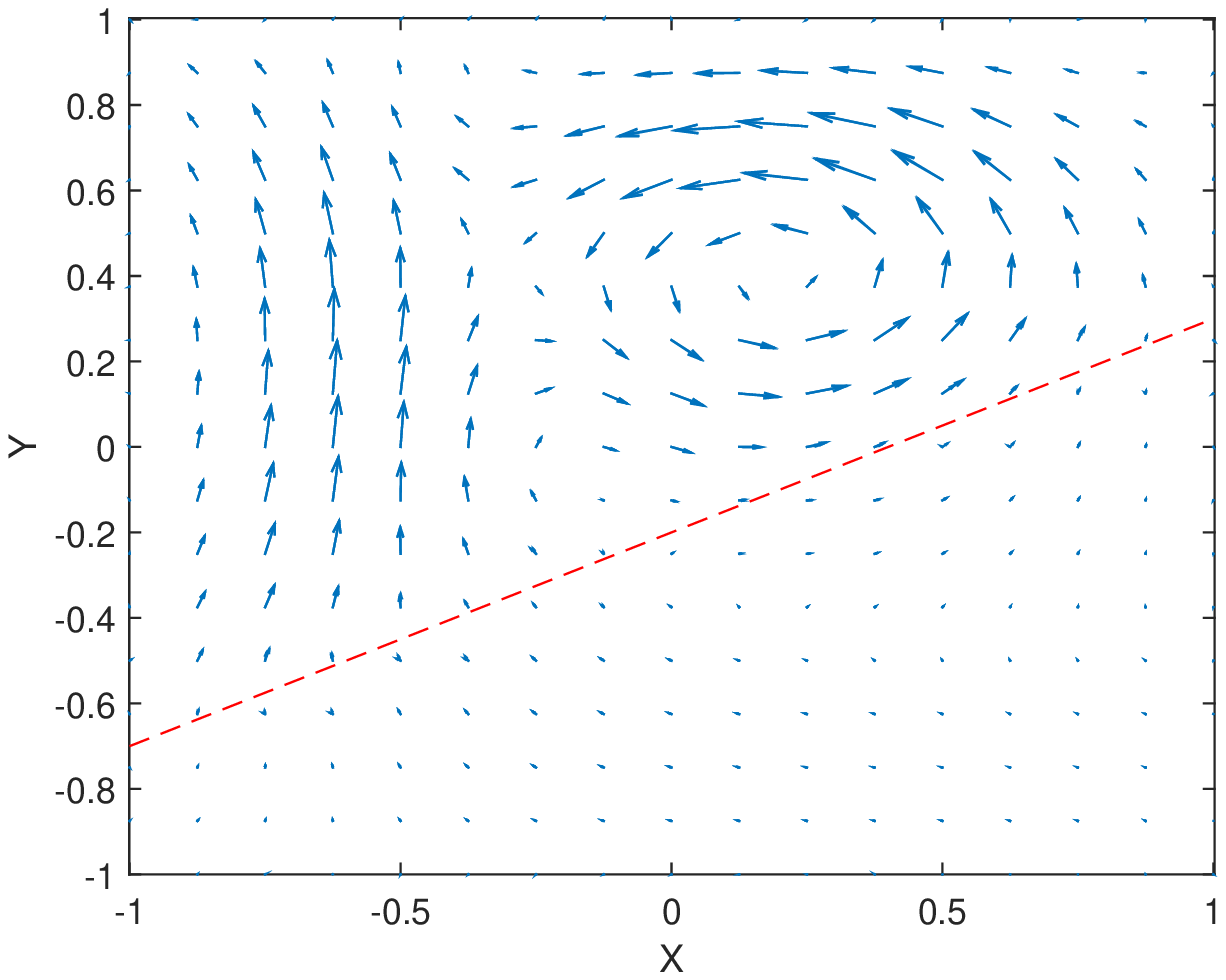} \\
	 \includegraphics[width=0.47\textwidth, height=0.43\textwidth]{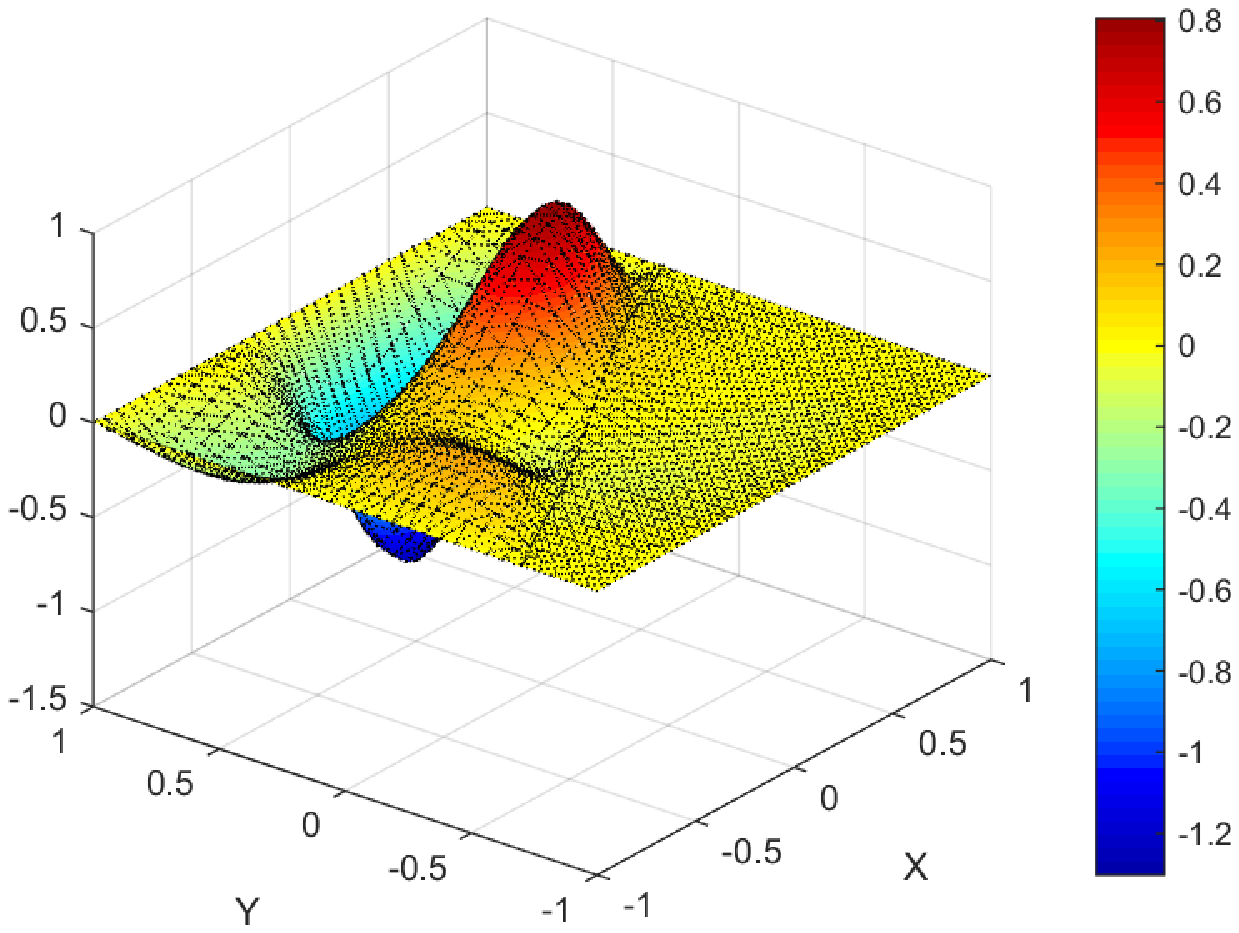}
	\includegraphics[width=0.47\textwidth, height=0.43\textwidth]{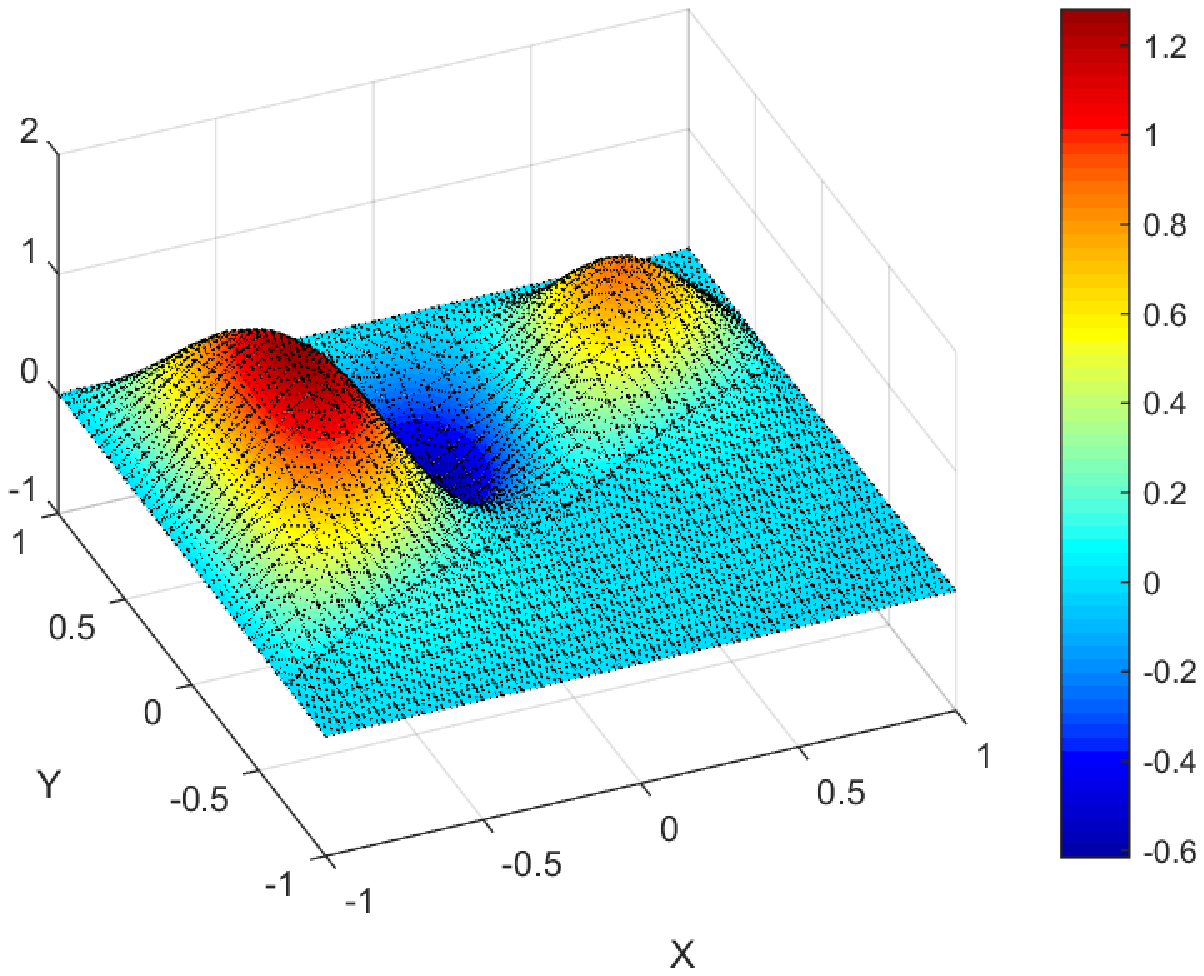}
	\caption{Eigenfunction of $\omega^2_4$ when $(\mu^-, \mu^+)=(0.5,5),\,\lambda^{\pm} = 5\mu^{\pm}$ in Example 3 (above), x-component of eigenfunction (below on the  left), and y-component of eigenfunction (below on the right).}
	\label{fig:line-eigvec}
\end{figure}

\textbf{Example4} ({\it Multiple interfaces}) In this case, we solved the problem (\ref{eq:Model}) with 5 circular interfaces. Let subdomains $\Omega^-$ and $\Omega^+$ be as follows
\begin{align*}
\Omega^- &= \cup_{i=1}^{5} \{(x,y) : (x-a_i)^2 + (y-b_i)^2 < r_i\}, \\
\Omega^+ &= \Omega \setminus \Omega^-,
\end{align*}
where $(a_1,b_1) = (0,0),\, r_1 = 0.26$ and $(a_i,b_i) = (\pm 0.5,\pm 0.5),\, r_i = 0.19$ for $ 2 \leq i \leq 5$ (see Figure\ref{fig:interface}). The Lam\'{e} coefficients are chosen as follows
: $(\mu^-,\lambda^-,\nu^-) = (1,2,0.33)$, $(\mu^{+},\lambda^{+},\nu^+) = (30,36,0.27)$. Figure \ref{fig:multiple-err} illustrates the error and the rates of convergence for $\omega^2_i,\,(1\leq i\leq 4)$ by IFEM. The results in Figure \ref{fig:multiple-err} are in good agreement with our theoretical analysis in the previous section. Figure \ref{fig:multiple-eigvec} depicts an eigenfunction of $\omega^2_7$.

\begin{figure}[!ht]
	\centering
	 \includegraphics[width=0.5\textwidth, height=0.5\textwidth]{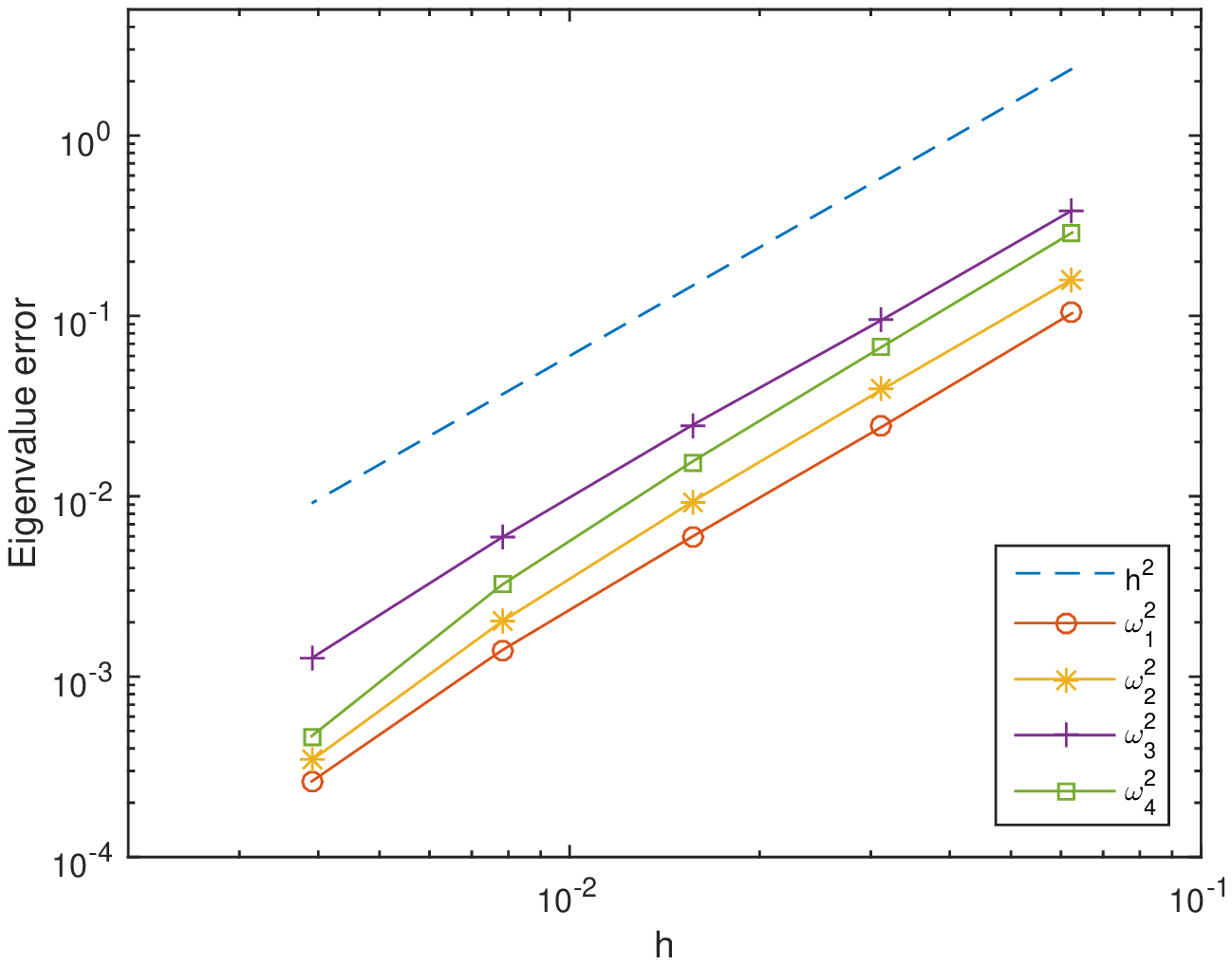}
	\caption{The log-log plots of $h$ versus the relative error of the first four eigenvalues with multiple interfaces for the case of $(\mu^-,\lambda^-) = (1,2)$, $(\mu^{+},\lambda^{+}) = (30,36)$ in Example 4. The broken line represents the optimal convergence rate.}
	\label{fig:multiple-err}
\end{figure}

\begin{figure}[!ht]
	\centering
	 \includegraphics[width=0.44\textwidth, height=0.44\textwidth]{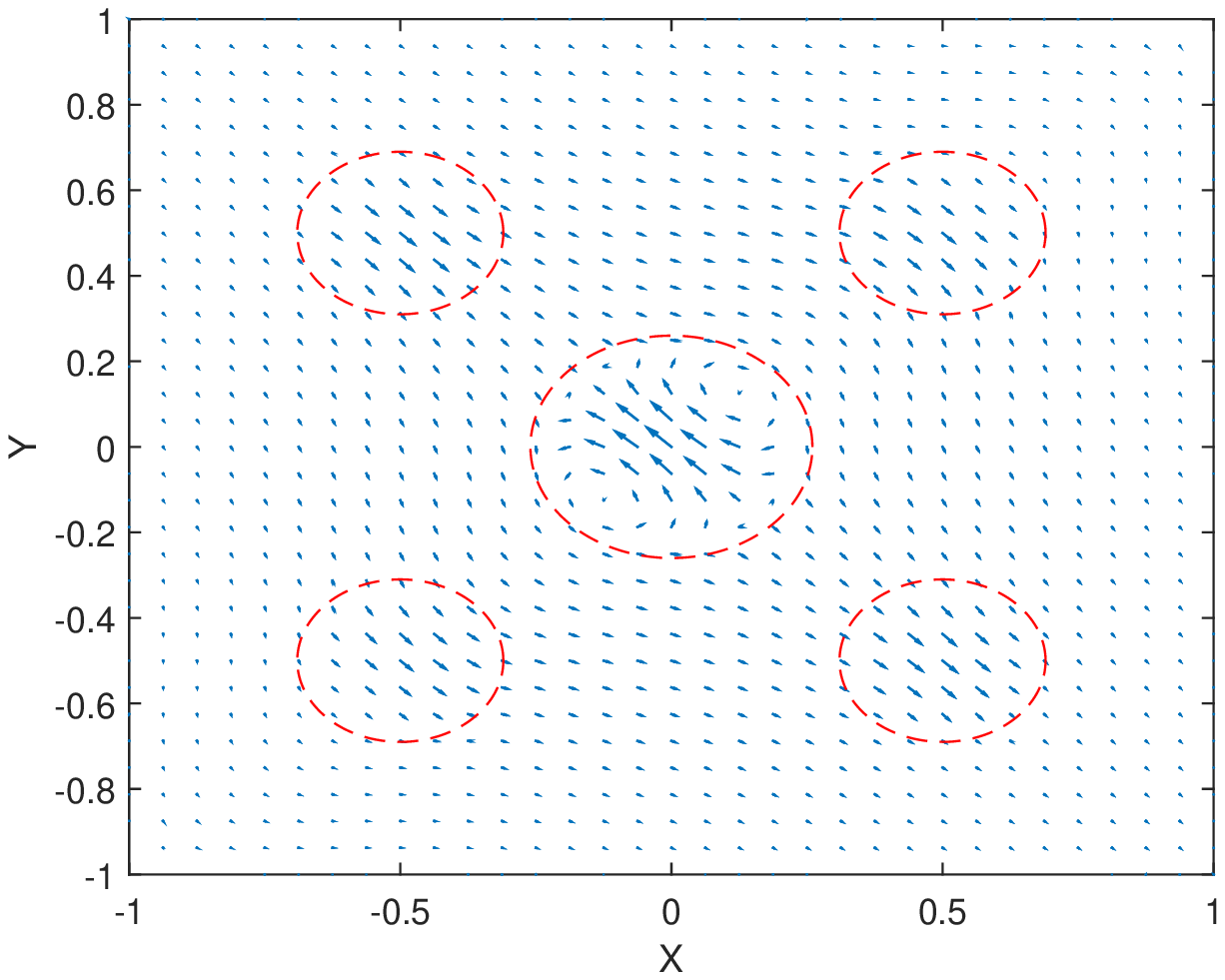} \\
	 \includegraphics[width=0.47\textwidth, height=0.43\textwidth]{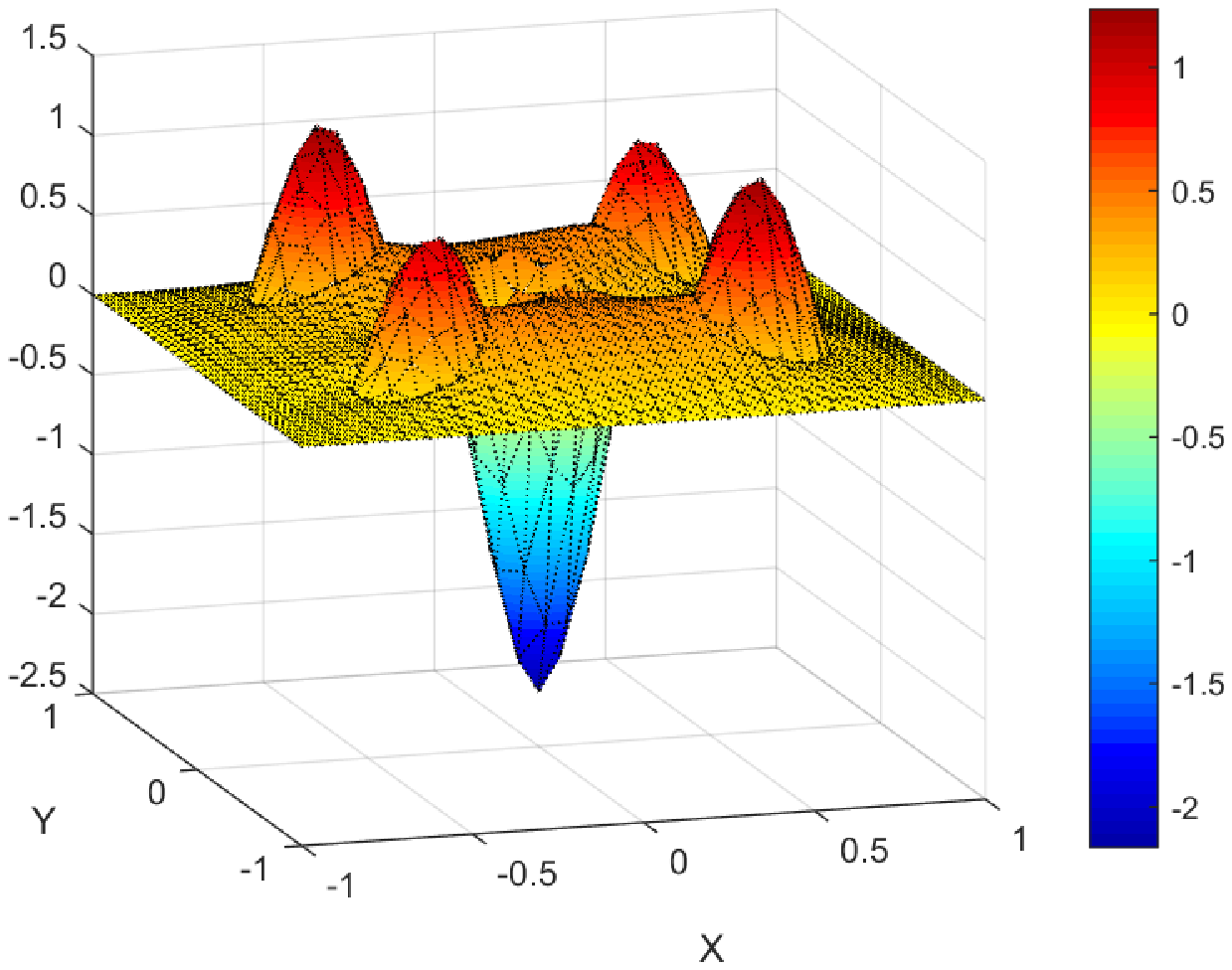}
	\includegraphics[width=0.47\textwidth, height=0.43\textwidth]{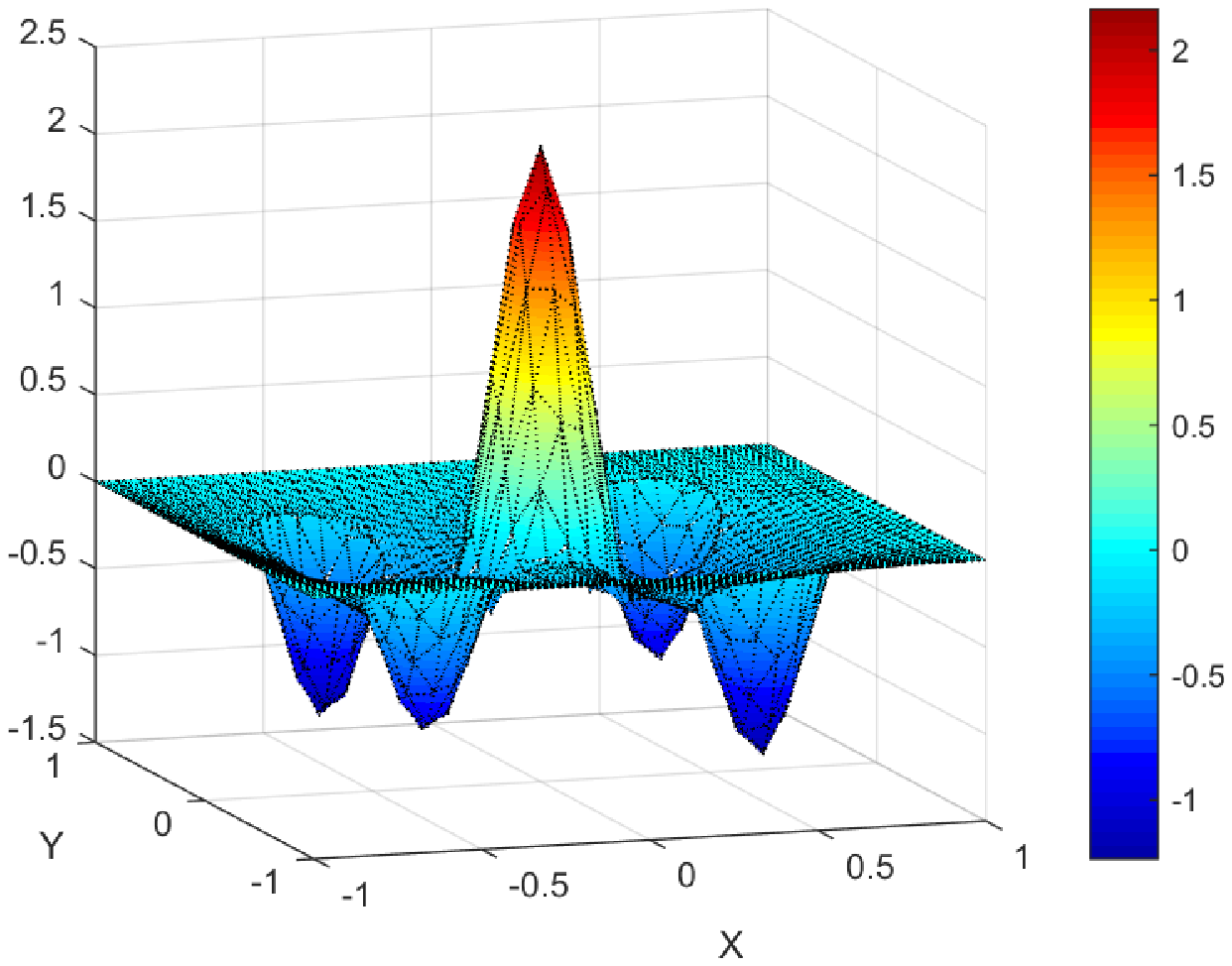}
	\caption{Eigenfunction of $\omega^2_7$ when $(\mu^-,\lambda^-) = (1,2)$, $(\mu^{+},\lambda^{+}) = (30,36)$ in Example 4 (above), x-component of eigenfunction (below on the  left), and y-component of eigenfunction (below on the right).}
	\label{fig:multiple-eigvec}
\end{figure}

\textbf{Example5} ({\it Incompressible materials}) To experiment the case of  the incompressible elastic materials, we set $(\mu^-, \mu^+) = (0.5,5),(5,0.5),\, \lambda^{\pm} = 5000\mu^{\pm}$ and $\nu^{\pm} \approx 0.4999$. We carry out similar numerical experiments with a straight-line interface to demonstrate the locking-free character of our method. The domain $\Omega$ and interface $\Gamma$ are the same as Example 3. In Figure \ref{fig:line-incomp}, we report the computed errors of first four eigenvalues by IFEM. According to Figure \ref{fig:line-incomp}, it can be seen that the method has thoroughly locking-free feature for solving the elasticity interface problems.

\begin{figure}[!ht]
	\centering
	 \includegraphics[width=0.45\textwidth, height=0.45\textwidth]{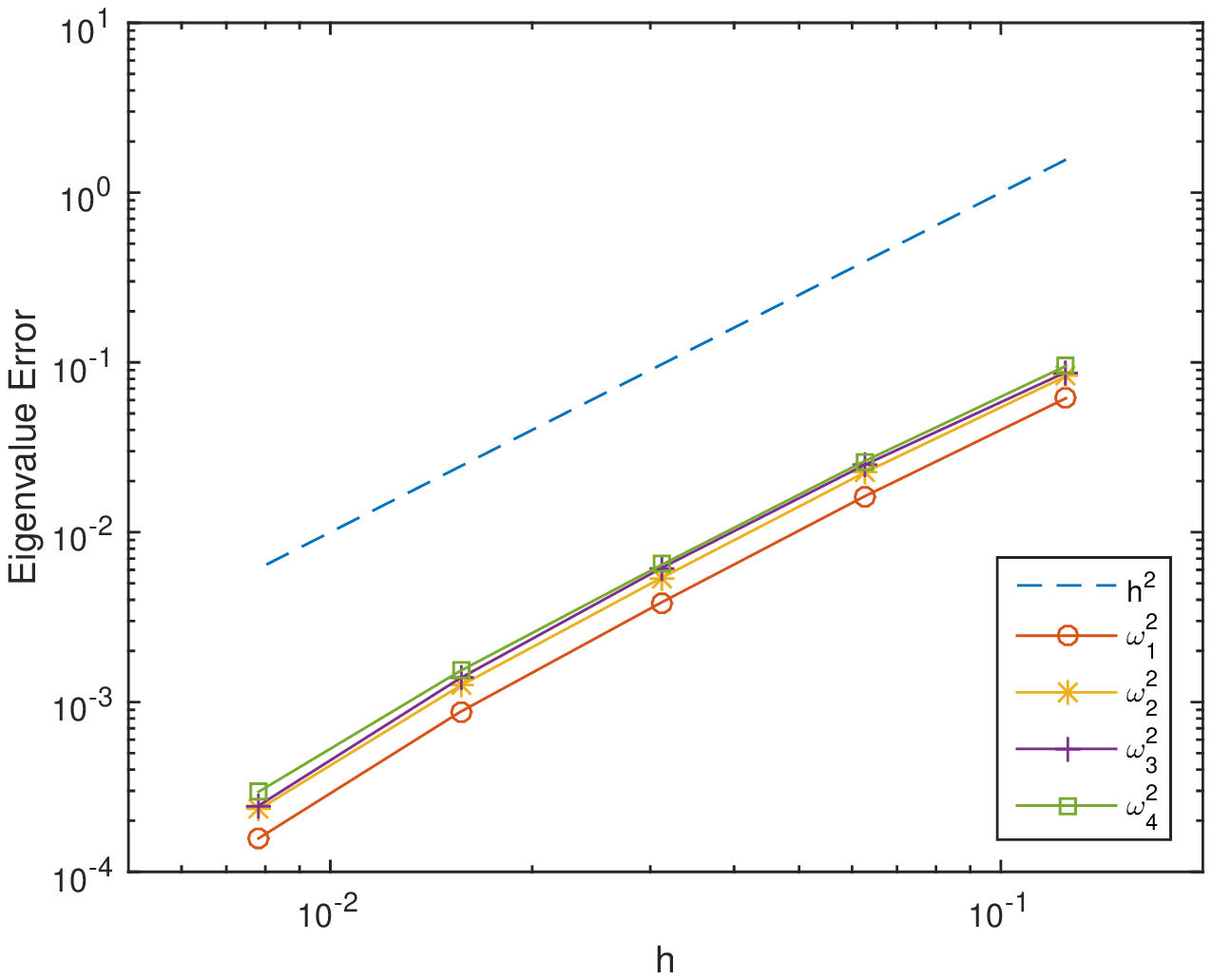}
	\includegraphics[width=0.45\textwidth, height=0.45\textwidth]{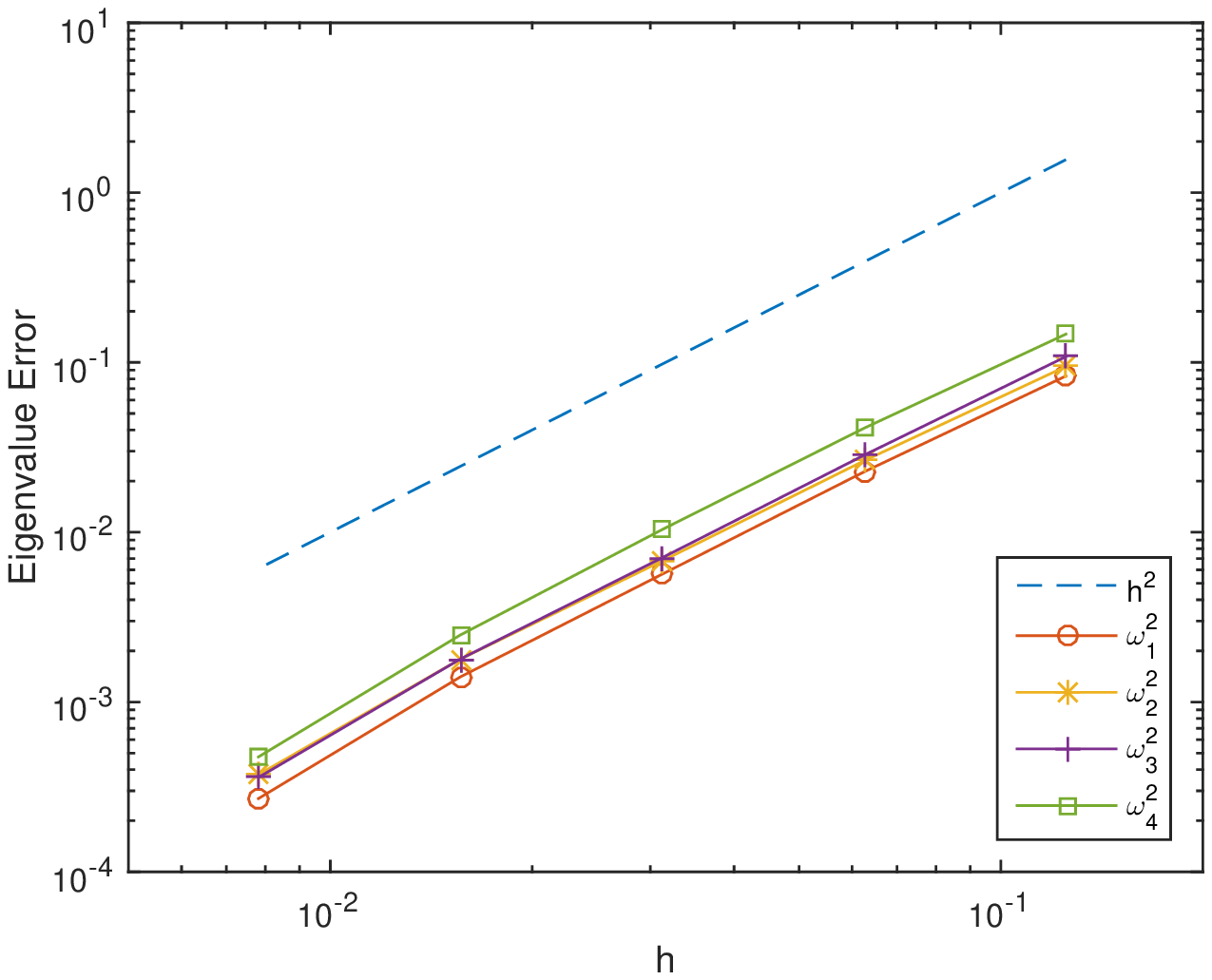}
	\caption{The log-log plots of $h$ versus the relative error of the first four eigenvalues with incompressible materials ($\nu \approx 0.4999$) for the case of $(\mu^-, \mu^+) = (0.5,5)$ (left) and $(\mu^-, \mu^+) = (5,0.5)$ (right) in Example 5. The broken line represents the optimal convergence rate.}
	\label{fig:line-incomp}
\end{figure}

\end{document}